\DeclareFontShape{T1}{lmr}{bx}{sc} { <-> ssub * cmr/bx/sc }{}
\pgfplotsset{compat=newest}
\numberwithin{equation}{section}
\setlist[enumerate]{label=(\roman*)}
\theoremstyle{plain}
\newtheorem{theorem}{Theorem}[section]
\newtheorem{proposition}[theorem]{Proposition}
\newtheorem{lemma}[theorem]{Lemma}
\newtheorem{remark}[theorem]{Remark}
\newtheorem{example}[theorem]{Example}
\newcommand{\N}{\ensuremath\mathbb{N}}
\newcommand{\Z}{\ensuremath\mathbb{Z}}
\newcommand{\R}{\ensuremath\mathbb{R}}
\newcommand{\spd}[1]{\mathbb{S}_{\succ}^{#1}}
\newcommand{\T}{\ensuremath\mathsf{T}}
\newcommand{\dx}{\,\mathrm{d}x}
\newcommand{\hook}{\ensuremath{\hookrightarrow}}
\DeclareMathOperator{\id}{id}
\DeclareMathOperator{\real}{Re}
\DeclareMathOperator{\tol}{TOL}
\newcommand{\calB}{\mathcal{B}}
\newcommand{\calM}{\mathcal{M}}
\newcommand{\calX}{\mathcal{X}}
\newcommand{\cA}{\ensuremath{\mathcal{A}}}
\newcommand{\cB}{\ensuremath{\mathcal{B}}}
\newcommand{\cC}{\ensuremath{\mathcal{C}}}
\newcommand{\cD}{\ensuremath{\mathcal{D}}}
\newcommand{\cH}{\ensuremath{\mathcal{H}}}
\newcommand{\cI}{\ensuremath{\mathcal{I}}}
\newcommand{\cM}{\ensuremath{\mathcal{M}}}
\newcommand{\cQ}{\ensuremath{\mathcal{Q}}}
\newcommand{\cV}{\ensuremath{\mathcal{V}}}
\newcommand{\cX}{\ensuremath{\mathcal{X}}}
\newcommand{\cHV}{\ensuremath{\cH_{\scalebox{.5}{\cV}}}}
\newcommand{\cHQ}{\ensuremath{\cH_{\scalebox{.5}{\cQ}}}}
\newcommand{\cHQdual}{\ensuremath{\cH^*_{\scalebox{.5}{\cQ}}}}
\newcommand{\CQtoH}{\ensuremath{{C_{\scalebox{.5}{\cQ \hook \cHQ}}}} }
\newcommand{\CQtoHsquare}{\ensuremath{{C^2_{\scalebox{.5}{\cQ \hook \cHQ}}}} }
\definecolor{color0}{rgb}{1.0, 0.0, 0.0}
\definecolor{color1}{rgb}{0.0, 0.0, 1.0}
\definecolor{color2}{rgb}{0.1, 0.2, 0.9}
\definecolor{color3}{rgb}{0.9, 0.2, 0.1}
\definecolor{color4}{rgb}{0.8, 0.2, 0.8}
\definecolor{color5}{rgb}{0,0,0}
\definecolor{rabred}{rgb}{1.0, 0.0, 0.0}
\definecolor{rabblue}{rgb}{0.0, 0.0, 1.0}
\definecolor{rabcol}{rgb}{0.1, 0.2, 0.9}
\definecolor{crimson2143940}{RGB}{214,39,40}
\definecolor{darkgray176}{RGB}{176,176,176}
\definecolor{darkorange25512714}{RGB}{255,127,14}
\definecolor{forestgreen4416044}{RGB}{44,160,44}
\definecolor{gray127}{RGB}{127,127,127}
\definecolor{mediumpurple148103189}{RGB}{148,103,189}
\definecolor{orchid227119194}{RGB}{227,119,194}
\definecolor{sienna1408675}{RGB}{140,86,75}
\definecolor{steelblue31119180}{RGB}{31,119,180}
\definecolor{cbsblue}{RGB}{68,119,170}
\definecolor{cbscyan}{RGB}{102,204,238}
\definecolor{cbsgreen}{RGB}{34,136,51}
\definecolor{cbsyellow}{RGB}{204,187,68}
\definecolor{cbsred}{RGB}{238,102,119}
\definecolor{cbspurple}{RGB}{170,51,119}
\definecolor{cbsgrey}{RGB}{187,187,187}
\newcommand{\bdf}[2]{\Xi_{#1}{(#2)}}
\newcommand{\BDF}{\textsf{BDF}\xspace}
\title{Higher-order iterative decoupling for poroelasticity} 
\author{Robert Altmann${}^\dagger$ \and Abdullah Mujahid${}^{\star}$ \and Benjamin Unger${}^\star$}
\address{${}^{\dagger}$ Institute of Analysis and Numerics, Otto von Guericke University Magdeburg, Universit\"atsplatz 2, 39106 Magdeburg, Germany}
\email{robert.altmann@ovgu.de}
\address{${}^{\star}$ Stuttgart Center for Simulation Science (SC SimTech), University of Stuttgart, Universit\"{a}tsstr.~32, 70569 Stuttgart, Germany}
\email{\{abdullah.mujahid,benjamin.unger\}@simtech.uni-stuttgart.de}
\date{\today}
\begin{document}

\begin{abstract}
For the iterative decoupling of elliptic--parabolic problems such as poroelasticity, we introduce time discretization schemes up to order $5$ based on the backward differentiation formulae. Its analysis combines techniques known from fixed-point iterations with the convergence analysis of the temporal discretization. As main result, we show that the convergence depends on the interplay between the time step size and the parameters for the contraction of the iterative scheme. Moreover, this connection is quantified explicitly, which allows for balancing the single error components. Several numerical experiments illustrate and validate the theoretical results, including a three-dimensional example from biomechanics.
\end{abstract}

\maketitle
{\footnotesize \textsc{Keywords:} poroelasticity, iterative scheme, decoupling, higher-order discretization}

{\footnotesize \textsc{AMS subject classification:} 65M12, 76S05, 65J10}
%
%

%
%
\section{Introduction}
\label{sec:intro}

The poroelasticity equations are a set of coupled partial differential equations (PDEs) describing the flow of fluids in porous materials. Initially, this model was introduced in~\cite{Bio41} for the consolidation of soil. The set of coupled PDEs in the quasi-static case consists of an elliptic equation for the mechanics and a parabolic equation for the fluid flow. The overall system is thus called an \emph{elliptic--parabolic PDE}. Among others, poroelasticity finds application in geomechanics~\cite{Zob10,KosSS80,WhiB08} and the macroscopic modeling of organs such as the human brain~\cite{EliRT23,CorADQ23} or the human heart~\cite{BarGDZVQ22}.

The well-posedness of such an elliptic--parabolic PDE is studied in~\cite{Sho00}. With an implicit Euler discretization in time, the spatial adaptivity is considered in~\cite{ErnM09}.
Spatial discretizations of this coupled problem result in block matrices, which have symmetric positive definite blocks on the diagonal.
With fully coupled methods, the computational burden is tremendous in three-dimensional applications, especially when employing higher-order discretization methods.
Hence, decoupling methods attempt to separate the flow and the mechanic equations -- either iteratively or non-iteratively -- such that efficient preconditioners for the subsystems can be used~\cite{LeeMW17}. 

For \emph{non-iterative methods}, first proposed and analyzed in~\cite{AltMU21b}, the convergence of the decoupled time-iteration is guaranteed whenever a certain \emph{weak coupling condition} is satisfied.
Such conditions are described by restrictions on the parameters of the PDE that quantify the strength of the coupling.
This framework also allows for a generalization to higher-order methods (in time)~\cite{AltMU24}.
To extend the stability regions, i.e., to relax the weak coupling condition, a procedure with an a priori known number of additional inner iterations for each time step is introduced in~\cite{AltD24,AltD24ppt}.
Extensions for corresponding nonlinear problems are considered in~\cite{AltMU21a,AltM22}.

\emph{Iterative methods} decouple the system by either first solving the mechanics equation and then the flow equation or vice versa with additional stabilization terms in a fixed-point iteration framework. The operator splitting is motivated by physics and is known as \emph{drained} and \emph{undrained splits}~\cite{KimTJ11a} as well as \emph{fixed stress} and \emph{fixed strain splits}~\cite{KimTJ11b, WheG07}.
The corresponding analysis shows the existence of a fixed-point either in the fully continuous case or the fully discrete case.
In the fully continuous case, the iterative solution is proven to converge to the solution of the fully coupled problem, hence accounting for the errors due to operator splitting.
As an example, the fixed-point analysis for the fully continuous case, resulting in the popular fixed-stress and undrained split methods, is shown in~\cite{MikW13}. 
In the fully discrete case, which considers either a first-order in time discretization~\cite{StoBKNR19} or the generalized midpoint rule~\cite{KimTJ11a,KimTJ11b}, the iterative solution is shown to reach the fixed-point, which can be shown to be a solution when applying the same discretization to the coupled problem.
Hence, one accounts again for the errors due to splitting alone of the discrete operators~\cite{StoBKNR19}.

In this article, we focus on the interaction between time integration and
operator splitting.
For this, we consider the fixed-stress operator splitting to decouple the given elliptic--parabolic problem and employ higher-order time discretizations, using the \emph{backward differentiation formulae} of $k$-th order (denoted as \BDF-$k$) for $k \le 5$. 
The choice of BDF methods over Runge--Kutta methods is more natural. 
This is due to the fact that higher-order Runge--Kutta schemes include the tensor product of the Butcher tableau with the decoupled block matrices resulting in larger
linear systems. Moreover, symmetry and positive definiteness of the decoupled block matrices is destroyed and transform based methods would be needed to prove full convergence~\cite{LubO95}. 

The results of this article are twofold. First, we present a rigorous
convergence analysis, which consists of convergence in time for the higher-order
time discretization combined with the fixed-point analysis of the operator splitting.
In particular, this accounts for the interaction between the two error components.
As a direct consequence of the presented analysis, we propose strategies for
optimizing the required number of inner iterations to reach a given tolerance
based on the information on the order of the time discretization and the step size.
Quantitatively, the relative tolerance $\tol$ for the stopping criteria is proposed
as $\tol \approx \tau^{k+3/2}$ for the \BDF-$k$ method, where $\tau$ is the time step size.
Second, the operator splitting is revisited with an academic toy example to study
the optimal choice of the stabilization term. 

The organization of this article is as follows.
In \Cref{sec:prelim}, the problem setting is introduced and we recall stability
results of \BDF methods applied to PDEs.
The \BDF-$k$ time discretization of the iteratively decoupled problem is given
in \Cref{sec:iterativeBDF} and the main result, which is the convergence of this
higher-order iterative solution to the true solution, is presented
in \Cref{th:convergence}.
Moreover, we analyze and quantify the interplay between the time-step size and
the iterative parameters that control the iterative contraction.
Finally, the expected convergence rates are demonstrated numerically
in \Cref{sec:numerics:orders}.
Furthermore, we study the balancing of the iterative parameters and the time step
size in \Cref{sec:numerics:balancing} and present a real-world example from
biomechanics in \Cref{sec:numerics:brain}. 

\subsection*{Notation}  
Throughout the paper, we write~$a \lesssim b$ to indicate that there exists a
generic constant~$C > 0$, independent of spatial and temporal discretization
parameters, such that $a \leq C b$. 
Further, the space of symmetric positive definite matrices of size $k$ is denoted
by~$\spd{k}$. 
The space of $k$-times continuously differentiable functions on the time
interval~$[0,T]$ with values in a Banach space~$\calX$ is denoted as
$\cC^{k}([0,T],\calX)$.
%
%
\section{Preliminaries and Problem Formulation}
\label{sec:prelim}

Let~$\Omega \subseteq \R^m$, $m\in\{2,3\}$ be a bounded Lipschitz domain and
$[0,T]$ the time interval where $0<T<\infty$.
We consider the quasi-static Biot poroelasticity model introduced in~\cite{Bio41}. 
Therein, the task is to find the deformation
~$u\colon [0,T]\times\Omega\rightarrow\R^{m}$ and the pressure
variable~$p\colon [0,T]\times\Omega\rightarrow\R$, satisfying the system of coupled
PDEs,
\begin{subequations}
	\label{eq:pdes}
	\begin{align}
		- \nabla\cdot\sigma(u) + \alpha \nabla p 
		&= {\hat f} \qquad\text{in }(0,T]\times\Omega, \label{eq:pdes:a}\\
		\partial_{t} \big(\alpha \nabla\cdot u + \tfrac{1}{M} p\big) - \nabla\cdot(\kappa\nabla p)
		&= {\hat g} \qquad\text{in } (0,T]\times\Omega. \label{eq:pdes:b}
	\end{align}
\end{subequations}
Here, $\sigma$ denotes the linear stress tensor
\begin{equation*}
	\sigma(u) 
	= \mu\, \big(\nabla u + (\nabla u)^\T \big) 
	+ \lambda\, (\nabla \cdot u) \id
\end{equation*}
with Lam\'{e} coefficients ${\lambda}$ and ${\mu}$. The quantity $\kappa$ denotes the permeability,$\alpha$ denotes the Biot--Willis fluid--solid coupling coefficient,
and $M$ the Biot moduli.
All these quantities are positive constants. 
Moreover, system~\eqref{eq:pdes} is equipped with appropriate initial data and
boundary conditions. 
This model is based on a quasi-static assumption where the inertial term is
neglected in \eqref{eq:pdes:a} and, hence, results in a set of coupled
elliptic--parabolic PDEs.
The model can also be characterized as {\em partial differential--algebraic
equation} (PDAE), since a spatial discretization yields a differential--algebraic
equation, cf.~\cite{AltMU21c}. 
\begin{example}[multiple network case]
\label{exp:network}
	A generalization of~\eqref{eq:pdes} is the so-called {\em multiple network
	poroelasticity theory}~\cite{BarZK60}, which finds application in the 
	modelling of the human brain~\cite{VarCT+16,EliRT23}. Instead of a single
	pressure variable, a set of $J \in \N$ pressure variables~$p_{i}\colon [0,T]\times\Omega\rightarrow\R$, $i = 1, \dots, J$ are sought,
	satisfying 
	\begin{align*}
		- \nabla\cdot\sigma(u) + \sum_{i=1}^{J}\alpha_i \nabla p_i 
		&= {\hat f} \qquad\text{in } (0,T]\times\Omega, \\
		\partial_{t} \Big(\alpha_i \nabla\cdot u + \tfrac{1}{M_i} p_{i} \Big) - \nabla\cdot \big(\kappa_{i}\nabla p_i \big) + \sum_{j\ne i}^{J}\beta_{ij}(p_i - p_j)
		&= {\hat g}_{i} \qquad\text{in } (0,T]\times\Omega.
	\end{align*}
	Here, the parameters $\alpha_i$, $M_i$, and $\kappa_i$ denote the quantities
	for the corresponding fluid network and $\beta_{ij} > 0$ represent
	so-called exchange rates between the fluid networks.  
\end{example}
The single fluid network \eqref{eq:pdes} is a special case of the multiple network
case from Example~\ref{exp:network} for $J=1$.
As illustrated in~\cite{AltMU21c}, however, the multiple network model has the same
structure as the single network case.
Hence, we restrict the following analysis to the single fluid network case, and only
mention relevant remarks about the multiple network case in the results.
As a first step, we introduce an abstract formulation of the model as an
elliptic--parabolic problem. 

\subsection{Abstract formulation as elliptic--parabolic problem}
\label{sec:prelim:parabolic}

Consider the Hilbert spaces
\[
  \cV\vcentcolon=[H_{0}^{1}(\Omega)]^m, \qquad
  \cQ\vcentcolon=H_{0}^{1}(\Omega), \qquad
  \cHV\vcentcolon=[L^{2}(\Omega)]^m, \qquad 
  \cHQ\vcentcolon=L^{2}(\Omega),
\]
resulting in the Gelfand triples $\cV \hook \cHV\simeq\cHV^{*}\hook\cV^{*}$ and
$\cQ \hook \cHQ\simeq\cHQ^{*}\hook\cQ^{*}$, cf.~\cite[Sect.~23.4]{Zei90}.
Moreover, we introduce the bilinear forms~$a\colon \cV\times\cV \to \R$,
$b\colon \cQ\times\cQ\to\R$, and $c\colon \cHQ\times\cHQ\to\R$, which we assume to
be symmetric, continuous, and elliptic in their respective spaces. 
The corresponding ellipticity and continuity constants of the bilinear forms
$\mathfrak{a}\in\{a, b, c\}$ are denoted by $c_{\mathfrak{a}}, C_{\mathfrak{a}} > 0$,
respectively.
Furthermore, let~$d\colon\cV\times\cHQ\to\R$ be a bounded bilinear form, i.e., we
assume that there exists a positive constant~$C_d > 0$ such
that~$d(u,p) \leq C_d \Vert u \Vert_{\cV} \Vert p\Vert_{\cHQ}$ 
for all~$u \in \cV$, $p\in \cHQ$.
Based on these quantities, we define the parameter 
\[
	\omega\vcentcolon=\frac{C_d^{2}}{c_a c_c},
\]
which quantifies the strength of the coupling between the elliptic and the
parabolic equations. 

Given sufficiently smooth source terms $f\colon[0,T] \to \cV^*$ and
$g\colon[0,T] \to \cQ^*$, we now seek abstract functions
$u\colon (0,T]\rightarrow\cV$ and $p\colon (0,T]\rightarrow\cQ$ such that
for a.\,e.~$t \in [0,T]$ it holds that
\begin{subequations}
\label{eq:ellpar}
\begin{align}
	a(u,v) - d(v, p) 
	&= \langle f, v \rangle, \label{eq:ellpar:a} \\
	d(\dot u, q) + c(\dot p,q) + b(p,q) 
	&= \langle g, q\rangle \label{eq:ellpar:b} 
\end{align}
for all test functions~$v\in \cV$, $q \in \cQ$. 
Furthermore, we assume {\em consistent} initial data
\begin{align}
	u(0) = u^0 \in \cV, \qquad 
	p(0) = p^0 \in \cHQ, \label{eq:ellpar:c} 
\end{align}
in the sense that we assume $a(u^0,v) - d(v, p^0) = \langle f(0),v\rangle$
for all~$v\in \cV$.
\end{subequations}
Without the coupling term represented by the bilinear form $d$,
equation~\eqref{eq:ellpar:a} is elliptic in $u$ and equation~\eqref{eq:ellpar:b} is
parabolic in $p$.
This explains why system~\eqref{eq:ellpar} is called an elliptic--parabolic problem. 

The bilinear forms for \eqref{eq:pdes} are defined as
\begin{align*}
	a(u,v) &\vcentcolon= \int_\Omega \sigma(u) : \varepsilon(v) \dx, \;\; 
	&&b(p,q) \vcentcolon= \int_\Omega \frac{\kappa}{\nu}\, \nabla p \cdot \nabla q \dx,\\
	c(p,q) &\vcentcolon= \int_\Omega \frac{1}{M}\, p\, q \dx, \;\; 
	&&d(u,q) \vcentcolon= \int_\Omega \alpha\, (\nabla \cdot u)\, q \dx.
\end{align*}
To see that the assumed properties are satisfied, we refer to~\cite{ErnM09,AltMU21b}.

The weak formulation~\eqref{eq:ellpar} can also be written in terms of operators in
the respective dual spaces.
For this, let $\cA\colon\cV\rightarrow\cV^*$, $\cB\colon\cQ\rightarrow\cQ^*$,
$\cC\colon\cHQ\rightarrow\cHQdual$, and $\cD\colon\cV\rightarrow\cHQ^{*}$ be the
operators associated with the bilinear forms $a$, $b$, $c$, and $d$ respectively. 
This then leads to the equivalent formulation 
\begin{subequations}
\label{eq:ellpar:opt}
    \begin{align}
        \cA u - \cD^{*} p 
        &= f \qquad \text{in } \cV^{*}, \label{eq:ellpar:opt:a}\\
        \cD{\dot u} + \cC{\dot p} + \cB p 
        &= g \hspace{0.79cm} \text{in } \cQ^{*}.\label{eq:ellpar:opt:b}
    \end{align}
\end{subequations}
Using operator matrices, this can also be written as 
\begin{align*}
	\begin{bmatrix}
		0 & 0\\
		\cD & \cC
	\end{bmatrix}\begin{bmatrix}
		\dot{u}\\\dot{p}
	\end{bmatrix} = \begin{bmatrix}
		-\cA & \phantom{-}\cD^{*}\\
		 0 & -\cB
	\end{bmatrix}\begin{bmatrix}
		u\\p
	\end{bmatrix} + \begin{bmatrix}
		f\\
		g
	\end{bmatrix},
\end{align*}
which directly reveals the PDAE structure. 
As a consequence, explicit methods for time integration cannot be used~\cite[Ch.~5.2]{KunM06}.

\subsection{BDF operators and weighted inner product spaces}
\label{sec:prelim:bdf}

Since we consider higher-order time discretizations of~\eqref{eq:ellpar} using \BDF,
we introduce the following notation: 
for a sequence $(y^n)_{n\in\Z}$ with $y^n\in\cX$ in a given Hilbert space $\cX$ and
a step size $\tau$, we define the \BDF-$k$ operator
\begin{equation}
	\label{eqn:BDFoperator}
	\bdf{k}{y^n} \vcentcolon=
	\frac{1}{\tau}\, \bigg(\sum_{\ell=0}^k \xi_\ell\, y^{n-\ell} \bigg),
\end{equation}
where the coefficients $\xi_\ell$ are determined from the relation 
\begin{equation}
	\label{eqn:BDFpolynomial}
	\xi(s) 
	\vcentcolon= \sum_{\ell=0}^k \xi_\ell s^\ell 
	= \sum_{\ell=1}^k \tfrac{1}{\ell}\, (1-s)^\ell;
\end{equation}
see for instance~\cite[Ch.~III.3]{HaiNW09} and \Cref{tab:coeffBDF} for some example
coefficients.
By abuse of notation, we ignore the dependency on $k$ and simply write
$\xi_\ell = \xi_{k,\ell}$. \BDF methods up to order $6$ are known to be
stable~\cite[Ch.~III.3]{HaiNW09}, but for the analysis we only consider
\BDF methods up to order $5$, for which simple multipliers can be constructed to
make it $A$--stable; see~\cite{NevO81} and the following proposition. 
\begin{table}
	\renewcommand{\arraystretch}{1.25}
	\caption{\BDF-$k$ coefficients~$\xi_\ell$, $\ell=0,\dots,k$, for different values of $k$.}
	\label{tab:coeffBDF}
	\centering
	\begin{tabular}{@{\quad}c@{\quad}|@{\quad}rrrrrr}
		$k$ & \quad$\xi_0$ & \quad$\xi_1$ & \quad$\xi_2$ & \quad$\xi_3$ & \quad$\xi_4$ & \quad$\xi_5$\\	\midrule
		$1$ & $1$ & $-1$& & & &\\
		$2$ & $\tfrac{3}{2}$ & $-2$ & $\tfrac{1}{2}$& & &\\
		$3$ & $\tfrac{11}{6}$ & $-3$ & $\tfrac{3}{2}$& $-\tfrac{1}{3}$& &\\
		$4$ & $\tfrac{25}{12}$ & $-4$ & $3$& $-\tfrac{4}{3}$&$\tfrac{1}{4}$ &\\
		$5$ & $\tfrac{137}{60}$ & $-5$ & $5$& $-\tfrac{10}{3}$ & $\tfrac{5}{4}$ & -$\tfrac{1}{5}$\\
	\end{tabular}
\end{table}

To simplify the forthcoming analysis of the iterative schemes involving
discretization via the \BDF operator~\eqref{eqn:BDFoperator}, we will use special
inner products by means of the following preliminary result, taken as a special case
from~\cite{NevO81,Dah78}, see also~\cite[Ch.~V.~6, Ch.~V.~8]{HaiW96}.
For this, we introduce for a matrix $G=[g_{ij}]\in\R^{k\times k}$, $k\geq 1$, an
operator $\calM\colon\cX\to\cX$, and a sequence $(y^n)_{n\in\Z}$ with $y^n\in\cX$
for $n\in\Z$ the Kronecker notation 
\begin{displaymath}
	(G\otimes \calM) \begin{bmatrix}
		y^{n}\\
		\vdots\\
		y^{n+1-k}
	\end{bmatrix} \vcentcolon= \begin{bmatrix}
		\sum_{j=1}^k g_{1j} \calM y^{n+1-j}\\
		\vdots\\
		\sum_{j=1}^k g_{kj} \calM y^{n+1-j}
	\end{bmatrix}
	\in \cX^{k}.
\end{displaymath}
With this, we obtain the following auxiliary result. 
\begin{proposition}
\label{prop:summation}
Let $\cX$ be a Hilbert space and $k\in\{1,\ldots,5\}$.
Then there exist a parameter~$\eta\in[0,1)$, a matrix $G = [g_{ij}]\in\spd{k}$, and
a vector $\gamma\in\R^{k+1}$ such that for any self-adjoint and monotone operator
$\calM\colon\cX\to\cX$, any sequence $(y^n)_{n\in\Z}$ with $y^n\in\cX$ for $n\in\Z$,
and constant $\tau>0$, we have
\begin{equation}
	\label{eqn:BDFtested}
	\begin{aligned}
		&\Big\langle \tau\calM\, \bdf{k}{y^n}, y^n - \eta y^{n-1}\Big\rangle_{\cX} 
		- \Big\|\calM^{1/2}\sum_{\ell=0}^k \gamma_{\ell+1}y^{n-\ell}\Big\|_{\cX}^2\\
		&\hspace{3em}= \left\langle (G\otimes \calM) \begin{bmatrix}
		y^{n}\\
		\vdots\\
		y^{n+1-k}
	\end{bmatrix},\begin{bmatrix}
		y^{n}\\
		\vdots\\
		y^{n+1-k}
	\end{bmatrix}\right\rangle_{\!\!\!\cX^{k}} -\ \left\langle (G\otimes \calM)
	\begin{bmatrix}
		y^{n-1}\\
		\vdots\\
		y^{n-k}
	\end{bmatrix},\begin{bmatrix}
		y^{n-1}\\
		\vdots\\
		y^{n-k}
	\end{bmatrix}\right\rangle_{\!\!\!\cX^{k}}.
	\end{aligned}
\end{equation}
\end{proposition}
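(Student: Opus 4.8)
The plan is to split the statement into an elementary algebraic reduction and one classical analytic input, which carries all the substance. Observe first that, once admissible data $\eta\in[0,1)$, $G\in\spd{k}$, and $\gamma\in\R^{k+1}$ have been fixed, \eqref{eqn:BDFtested} is merely an identity between two quadratic expressions in the finitely many vectors $y^{n},y^{n-1},\dots,y^{n-k}$, so the only real task is to exhibit such data. I would obtain them from the $G$-stability theory of Dahlquist~\cite{Dah78} together with the Nevanlinna--Odeh multipliers~\cite{NevO81}, after peeling off the operator $\calM$ and then the Hilbert-space structure.

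\emph{Step 1 (removing $\calM$).} Since $\calM$ is self-adjoint and monotone, the spectral theorem provides a self-adjoint square root $\calM^{1/2}$ with $\langle\calM a,b\rangle_{\cX}=\langle\calM^{1/2}a,\calM^{1/2}b\rangle_{\cX}$. Setting $z^{n}\vcentcolon=\calM^{1/2}y^{n}$ and using linearity of $\bdf{k}{\cdot}$ and of the Kronecker operator $G\otimes\calM$, each of the four terms in \eqref{eqn:BDFtested} rewrites verbatim with $y$ replaced by $z$ and $\calM$ deleted; for instance $\langle\tau\calM\bdf{k}{y^n},y^n-\eta y^{n-1}\rangle_{\cX}=\langle\tau\bdf{k}{z^n},z^n-\eta z^{n-1}\rangle_{\cX}$, $\|\calM^{1/2}\sum_\ell\gamma_{\ell+1}y^{n-\ell}\|_{\cX}^2=\|\sum_\ell\gamma_{\ell+1}z^{n-\ell}\|_{\cX}^2$, and $\langle(G\otimes\calM)(y^n,\dots,y^{n+1-k})^\T,(y^n,\dots,y^{n+1-k})^\T\rangle_{\cX^k}=\sum_{i,j}g_{ij}\langle z^{n+1-j},z^{n+1-i}\rangle_{\cX}$. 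Hence it suffices to prove the case $\calM=\id$.

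\emph{Step 2 (removing the Hilbert space) and conclusion.} With $\calM=\id$, both sides of \eqref{eqn:BDFtested} are real linear combinations of the inner products $\langle z^{a},z^{b}\rangle_{\cX}$ with $a,b\in\{n-k,\dots,n\}$, with coefficients depending only on $(\eta,\gamma)$, respectively on $G$. By symmetry of the inner product these are quadratic forms, so the identity holds for every Hilbert space and every sequence iff it holds for $\cX=\R$ and every real sequence. In the scalar case \eqref{eqn:BDFtested} becomes the telescoping relation
\[
\Big(\sum_{\ell=0}^{k}\xi_\ell\, y^{n-\ell}\Big)\big(y^{n}-\eta y^{n-1}\big)-\Big(\sum_{\ell=0}^{k}\gamma_{\ell+1}\, y^{n-\ell}\Big)^{2}
= Y_n^{\T}GY_n-Y_{n-1}^{\T}GY_{n-1},
\]
with $Y_n\vcentcolon=(y^{n},\dots,y^{n+1-k})^{\T}$ and $Y_{n-1}\vcentcolon=(y^{n-1},\dots,y^{n-k})^{\T}$, which is exactly the $G$-stability identity for the \BDF-$k$ scheme tested with the multiplier $1-\eta s$ ($s$ the backward shift). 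By Dahlquist's equivalence theorem~\cite{Dah78}, such a representation with $G\in\spd{k}$ and real $\gamma$ exists whenever $\real\big(\xi(s)/(1-\eta s)\big)\ge 0$ on the closed unit disk (recall $\xi(1)=0$, cf.\ \eqref{eqn:BDFpolynomial}); for $k\le 5$ the multipliers $\eta=\eta_k\in[0,1)$ of Nevanlinna--Odeh~\cite{NevO81} satisfy precisely this positivity, and their construction furnishes the admissible $G$ and $\gamma$; see also~\cite[Ch.~V.6, Ch.~V.8]{HaiW96}. Together with Steps~1--2 this yields the claim, with the same $\eta$, $G$, $\gamma$ for all $\calM$.

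The one nontrivial ingredient, which I would cite rather than reprove, is the verification that the explicit Nevanlinna--Odeh multiplier keeps $\real(\xi(s)/(1-\eta_k s))$ nonnegative on $|s|\le 1$ — delicate already for $k=3,4,5$ (with $\eta_5\approx 0.816$ close to the admissible threshold, which is why $k=6$ is excluded) — together with Dahlquist's passage from this positivity to a \emph{positive definite} $G$, and not merely a semidefinite one, plus the square summand built from $\gamma$, via a Fej\'er--Riesz-type factorization of the nonnegative trigonometric polynomial $\real(\xi(s)\overline{1-\eta_k s})$. Everything else is bookkeeping.
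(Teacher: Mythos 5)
Your proposal is correct and follows essentially the same route as the paper: reduce to $\calM=\id$ via the self-adjoint square root $\calM^{1/2}$, then recognize the remaining identity as the algebraic criterion in the Dahlquist $G$-stability/$A$-stability equivalence, supplied for $k\le 5$ by the Nevanlinna--Odeh multipliers $\eta\in[0,1)$. Your additional reduction to $\cX=\R$ by comparing the coefficient matrices of the two quadratic forms is a clean way of making precise what the paper phrases as ``the same argument in an arbitrary inner product space.''
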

\begin{remark}
The first term on the left hand side of~\eqref{eqn:BDFtested} is independent of
$\tau$, since $\bdf{k}{\cdot}$ scales with~$\frac{1}{\tau}$.
Hence, the above statement holds true independently of $\tau$.
\end{remark}
\begin{proof}[Proof of \Cref{prop:summation}]
	For a self-adjoint and monotone operator, the square root $\calM^{1/2}$ is well-defined and is itself a self-adjoint operator \cite[Ch.~5, \S 3, Th.~3.35]{Kat95}.
	With this, we consider the sequence $(z^n)_{n\in\Z}$, given by $z^n \vcentcolon= \calM^{1/2} y^n\in\cX$ for $n\in\Z$. Then \eqref{eqn:BDFtested} is equivalent to
	\begin{equation*}
		\begin{aligned}
			&\big\langle \tau\, \bdf{k}{z^n}, z^n - \eta z^{n-1}\big\rangle_{\cX} 
			- \Big\|\sum_{\ell=0}^k \gamma_{\ell+1}z^{n-\ell}\Big\|_{\cX}^2\\
			&\hspace{3em}= \left\langle G \begin{bmatrix}
			z^{n}\\
			\vdots\\
			z^{n+1-k}
		\end{bmatrix},\begin{bmatrix}
			z^{n}\\
			\vdots\\
			z^{n+1-k}
		\end{bmatrix}\right\rangle_{\!\!\!\cX^{k}} -\ \left\langle G \begin{bmatrix}
			z^{n-1}\\
			\vdots\\
			z^{n-k}
		\end{bmatrix},\begin{bmatrix}
			z^{n-1}\\
			\vdots\\
			z^{n-k}
		\end{bmatrix}\right\rangle_{\!\!\!\cX^{k}}.
		\end{aligned}
	\end{equation*}
	Note that this is an algebraic criterion, which appears in proving the equivalence
	between $G$- and $A$-stability of the multistep methods.
	We refer to~\cite[Ch.~V.6, Def.~6.3]{HaiW96} for the definition of $G$-stability
	and to~\cite[Ch.~V.1, Def.~1.1 and Th.~1.3]{HaiW96} for the
	definition of $A$-stability and the corresponding criterion.
	Together with the assumption that the first inner
	product on the left-hand side is at most zero, the above criterion leads to $G$-stability for multistep methods.
	The $A$-stability criterion for the \BDF-$k$ methods reads
	\begin{displaymath}
		\real{{\frac{\xi(\zeta)}{1 - \eta \zeta}}} \ge 0 
		\quad\text{for}\quad 
		|\zeta| \ge 1
	\end{displaymath}
	for appropriate multipliers $\eta$ and the polynomial $\xi$
	from~\eqref{eqn:BDFpolynomial}.
	The existence of $\eta \in [0,1)$ for $k\le 5$ is shown in~\cite{NevO81}.
	The remaining part of the proof is the same as that
	in~\cite[Ch.~V.6, Ch.~V.8]{HaiW96} albeit with an arbitrary inner product space
	instead of the finite-dimensional Euclidean space. 
\end{proof}
\begin{example}
	For \BDF-$1$ ($\xi_0 = 1, \xi_1 = -1$), which equals the implicit Euler scheme,
	we immediately see that equality~\eqref{eqn:BDFtested} is satisfied for the
	choice $\eta=0$, $G = \tfrac{1}{2}$,
	and $\gamma = \big[\pm\tfrac{1}{\sqrt{2}},\mp\tfrac{1}{\sqrt{2}}\big]$.
	For \BDF-$2$ with coefficients
	$\xi_0 = \tfrac{3}{2}$, $\xi_1 = -2$, $\xi_2 = \tfrac{1}{2}$, the choice
	\begin{equation*}
		\eta=0, \qquad 
		G = \begin{bmatrix}\tfrac{5}{4} & -\tfrac{1}{2}
			\\ -\tfrac{1}{2} & \tfrac{1}{4}
		\end{bmatrix},\qquad 
		\gamma = \pm\, \Big[\tfrac{1}{2}\ \ 1\ \ \tfrac{1}{2}\Big]
	\end{equation*}
	leads to equality~\eqref{eqn:BDFtested}, cf.~\cite[Ch.\,V.\,6, Ex.\,6.5]{HaiW96}.
\end{example}
Under the assumptions of \Cref{prop:summation}, the inner product used in~\eqref{eqn:BDFtested} defines a norm, which we denote as
\begin{align*}
	\Vert E \Vert_{G, \calM}^2 
	\vcentcolon= \big\langle (G\otimes\calM) E,E \big\rangle_{\cX^{k}}
\end{align*}
for all $E \in \cX^{k}$.
\subsection{Discrete Gr\"{o}nwall-type inequality}
\label{sec:prelim:gronwall}

	We use the following perturbed Gr\"{o}nwall-type inequality in
	the proof of \Cref{th:convergence} below.
	\begin{lemma}
		\label{lem:gronwalls}
		Consider sequences of positive numbers $\{a^{n}\}$, $\{b^{n}\}$,
		and $\{c^{n}\}$ satisfying
		\[
		a^{n} - (1 + \tau C)\, a^{n-1} + b^{n} - b^{n-1}
		\le c^{n}
		\]
		for some positive constants $\tau, C >0$. Then it holds that 
		\begin{displaymath}
			{a}^{n} + {b}^{n} \le e^{C n \tau} \Big({a}^{0} + {b}^{0}
			+ \frac{1}{C\tau}\max_{1\le\ell\le n}{c^{\ell}}\Big).
		\end{displaymath}
	\end{lemma}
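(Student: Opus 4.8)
The plan is to collapse the coupled two-sequence recursion into a single scalar discrete Grönwall inequality. First I would set $d^n \vcentcolon= a^n + b^n$ and observe that, because $\tau C > 0$ and all quantities are nonnegative, the hypothesis can be rewritten as
\[
a^n + b^n \le (1+\tau C)\,a^{n-1} + b^{n-1} + c^n \le (1+\tau C)\,(a^{n-1} + b^{n-1}) + c^n,
\]
where the last inequality merely uses $b^{n-1} \le (1+\tau C)\,b^{n-1}$. This yields the scalar recursion $d^n \le (1+\tau C)\,d^{n-1} + c^n$ valid for every $n \ge 1$.

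Next I would unroll this linear recursion by a straightforward induction on $n$, giving
\[
d^n \le (1+\tau C)^n\, d^0 + \sum_{j=1}^n (1+\tau C)^{n-j}\, c^j.
\]
Estimating each $c^j$ by $c^\ast \vcentcolon= \max_{1 \le \ell \le n} c^\ell$ and evaluating the geometric sum, $\sum_{j=1}^n (1+\tau C)^{n-j} = \frac{(1+\tau C)^n - 1}{\tau C} \le \frac{(1+\tau C)^n}{\tau C}$, one obtains
\[
d^n \le (1+\tau C)^n \Big( d^0 + \frac{c^\ast}{\tau C} \Big).
\]

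Finally, I would invoke the elementary bound $1 + x \le e^x$ for $x \ge 0$ to replace $(1+\tau C)^n$ by $e^{n C \tau}$, and recall $d^0 = a^0 + b^0$; this is precisely the asserted estimate, and since $d^n = a^n + b^n$ the left-hand side matches as well. There is no real obstacle here: the only point requiring a moment's care is the very first step — recognizing that the $b$-increments, which enter the hypothesis without the amplification factor $(1+\tau C)$, can nonetheless be absorbed into that factor by nonnegativity — after which the argument is a textbook iteration.
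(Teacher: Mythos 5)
Your proof is correct and is essentially the same argument as the paper's: the paper multiplies through by the discrete integrating factor $(1+\tau C)^{-n}$ and telescopes, which is exactly your direct unrolling of the recursion in disguise, and both hinge on the same small observation that the unamplified $b^{n-1}$ term can be absorbed using positivity (you write $b^{n-1}\le(1+\tau C)\,b^{n-1}$, the paper writes $(1+\tau C)^{-n}b^{n-1}\le(1+\tau C)^{-(n-1)}b^{n-1}$). The geometric-sum bound and the final $1+x\le e^{x}$ step also match the paper's.
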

	\begin{proof}
		The idea of the proof is to transform the perturbed	sum into a
		telescoping sum; see also~\cite{Emm99}.
		For this, we set ${\tilde a}^{n} \vcentcolon= (1 + \tau C)^{-n}a^{n}$
		and ${\tilde b}^{n} \vcentcolon= (1 + \tau C)^{-n}b^{n}$.
		Then the assumed inequality yields the estimate 
		\begin{align*}
			{\tilde a}^{n} - {\tilde a}^{n-1} 
			&= (1 + \tau C)^{-n} \big(a^{n} -(1 + \tau C)\, a^{n-1}\big)
			\nonumber\\
			&\le (1 + \tau C)^{-n}c^{n} - (1 + \tau C)^{-n}b^n
			+ (1 + \tau C)^{-n}b^{n-1} \nonumber\\
			&\le (1 + \tau C)^{-n}c^{n} - (1 + \tau C)^{-n}b^n
			+ (1 + \tau C)^{-(n-1)}b^{n-1} \nonumber\\
			&\le (1 + \tau C)^{-n}c^{n} - {\tilde b}^{n}
			+ {\tilde b}^{n-1},
		\end{align*}
		where we have used $(1 + \tau C)^{-n} \le (1 + \tau C)^{-{n-1}}$.
		Summing up from $1$ to $n$, we obtain
		\begin{align*}
			{\tilde a}^{n} + {\tilde b}^{n} \le {\tilde a}^{0} + {\tilde b}^{0}
			+ \sum_{\ell=1}^{n}(1 + \tau C)^{-\ell}c^{\ell}
			\le {\tilde a}^{0} + {\tilde b}^{0}
			+ \sum_{\ell=1}^{n}(1 + \tau C)^{-\ell}\max_{1\le\ell\le n}{c^{\ell}},
		\end{align*}
		which simplifies to
		\begin{align*}
			{a}^{n} + {b}^{n} \le (1 + \tau C)^{n} \Big({a}^{0} + {b}^{0}
			+ (1 + \tau C)^{-n}
				\sum_{\ell=0}^{n-1}(1 + \tau C)^{\ell}
				\max_{1\le\ell\le n}{c^{\ell}}\Big).
		\end{align*}
		With the observations 
		\begin{displaymath}
			(1 + \tau C)^{n} \le e^{C n \tau} \quad \text{and} \quad
			(1 + \tau C)^{-n}\sum_{\ell=0}^{n-1}(1 + \tau C)^{\ell} \le
			(1 + \tau C)^{-n}\frac{(1 + \tau C)^{n} - 1}{C\tau}
			\le \frac{1}{C\tau}
		\end{displaymath}
		the assertion follows.
	\end{proof}
%
\section{Iterative Decoupling and BDF Time Integration}
\label{sec:iterativeBDF}

To iteratively decouple \eqref{eq:ellpar:opt}, we consider sequences $(u^i)_{i\in\N}$, $(p^i)_{i\in\N}$ together with their derivatives and enforce 
\begin{align}
	\cD{\dot u}^{i} - \cD{\dot u}^{i-1} 
	&= L \cI ({\dot p}^{i} - {\dot p}^{i-1}) \qquad \,\text{in } \cQ^{*}, \label{eq:ep:fsdecoup}
\end{align}
where $\cI\colon\cQ^{*}\to\cQ^{*}$ is the identity operator, $L > 0$ a stabilization parameter, and the superscript denotes the (inner) iteration number. Including this equation in~\eqref{eq:ellpar:opt:b}, we obtain the decoupled system 
\begin{subequations}
	\label{eq:ep:fs}
		\begin{align}
			\cA u^{i} - \cD^{*} p^{i} 
			&= f \qquad \text{in } \cV^{*}, \label{eq:ep:fs:a}\\
			\cD{\dot u}^{i-1} + \cC{\dot p}^{i} + \cB p^{i} + L \cI ({\dot p}^{i} - {\dot p}^{i-1}) 
			&= g \qquad \,\text{in } \cQ^{*}. \label{eq:ep:fs:b}
		\end{align}
\end{subequations}
This system is indeed decoupled, since~\eqref{eq:ep:fs:b} can be used to first solve for~$p^{i}$. Afterwards, $u^{i}$ is solved from \eqref{eq:ep:fs:a}. System \eqref{eq:ep:fs} is well-posed and the iterates converge globally to the true solution, cf.~\cite{Muj22,MikW13}.  
We now consider a discretization in time and apply the~\BDF-$k$ scheme to~\eqref{eq:ep:fs}, leading to
\begin{subequations}
	\label{eq:ep:fs:bdf}
	\begin{align}
		\cA u^{n,i} - \cD^{*} p^{n,i} &= f^{n}  \qquad \text{in } \cV^{*}, \label{eq:ep:fs:bdf:a}\\
		\cD\bdf{k}{u^{n,i-1}} + \cC\bdf{k}{p^{n,i}} + \cB p^{n,i} + L \cI (\bdf{k}{p^{n,i}} - \bdf{k}{p^{n,i-1}})&= g^{n}  \qquad \text{in } \cQ^{*}. \label{eq:ep:fs:bdf:b}
	\end{align}
\end{subequations}
Here, $y^{n,i}$ denotes the approximation of $y(t^{n})$ after $i$ inner iterations (for $y\in\{u,p\}$), and the~\BDF-$k$ operator $\bdf{k}{y^{n,i}}$ is defined as
\begin{displaymath}
	\bdf{k}{y^{n,i}} 
	\vcentcolon= \frac{1}{\tau}\, \bigg(\xi_{0} y^{n,i} + \sum_{\ell=1}^k \xi_{\ell}\, y^{n-\ell,J_{n-\ell}} \bigg).
\end{displaymath}
Therein, $\xi_0,\ldots,\xi_k$ are the \BDF coefficients (see \Cref{sec:prelim:bdf}) and $J_n$ denotes the (minimal) number of iterations such that the error between two iterates for the solution at time point $t^{n}$ goes below a prescribed tolerance $\tol$. More precisely, we set~$J_n = i$, when
\begin{align}
	\label{eq:iterNumDef}
	\tfrac{c_a}{2}\, \Vert u^{n,i} - u^{n,i-1}\Vert_{\cV}^{2}
	+ \big(c_c + \tfrac{L}{2}\big)\, \Vert p^{n,i} - p^{n,i-1} \Vert^{2}_{\cHQ}
	+ \tfrac{\tau}{\xi_0}c_b\, \Vert p^{n,i} - p^{n,i-1} \Vert^{2}_{\cQ} 
	\le \tol^2
\end{align}
and stop the inner iterations.
The initial guess for the iterates can be chosen to be the solution at the previous
time step.
The choice of the norm on the left-hand side of \eqref{eq:iterNumDef} is motivated
from the following lemma. 
\begin{lemma}
\label{lem:contraction}
Let the stabilization parameter $L$ satisfy $L \ge \frac{C_a^2 C_d^2}{2c_a^3}$.
Then the discrete iterative problem \eqref{eq:ep:fs:bdf} is contractive and, hence, well-defined.
\end{lemma}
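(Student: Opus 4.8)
The plan is to fix a time level $n$ and analyze the inner loop $i \mapsto (u^{n,i},p^{n,i})$ as a fixed-point iteration, using that during the inner loop all contributions to $\bdf{k}{\cdot}$ from the previous time levels are frozen, so that $\bdf{k}{y^{n,i}} - \bdf{k}{y^{n,i-1}} = \tfrac{\xi_0}{\tau}(y^{n,i}-y^{n,i-1})$ for $y\in\{u,p\}$. First I would check that the iteration is well-defined step by step: given $u^{n,i-1}$ and $p^{n,i-1}$, equation~\eqref{eq:ep:fs:bdf:b} is the weak form of a problem on $\cQ$ whose bilinear form $\tfrac{\xi_0}{\tau}\big(c(\cdot,\cdot)+L\,(\cdot,\cdot)_{\cHQ}\big)+b(\cdot,\cdot)$ is bounded on $\cQ$ and $\cQ$-elliptic (it dominates $b$), so Lax--Milgram yields a unique $p^{n,i}\in\cQ$; then \eqref{eq:ep:fs:bdf:a} is $a$-elliptic on $\cV$ with right-hand side $f^{n}+\cD^{*}p^{n,i}\in\cV^{*}$, giving a unique $u^{n,i}\in\cV$. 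It then remains to show that this map is a contraction, which simultaneously proves convergence to the (necessarily unique) solution of the coupled scheme~\eqref{eq:ep:fs:bdf} and shows that the stopping criterion~\eqref{eq:iterNumDef} is reached after finitely many steps, i.e.\ $J_n<\infty$.

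For the contraction I would subtract two consecutive iterates. With $\delta u^{i}\vcentcolon=u^{n,i}-u^{n,i-1}$ and $\delta p^{i}\vcentcolon=p^{n,i}-p^{n,i-1}$, subtraction in~\eqref{eq:ep:fs:bdf:a} gives $\cA\,\delta u^{i}=\cD^{*}\delta p^{i}$, hence $\delta u^{i}=\cA^{-1}\cD^{*}\delta p^{i}$; testing this with $\delta u^{i}$ and using ellipticity of $a$ together with boundedness of $d$ yields the elliptic estimate $\|\delta u^{i}\|_{\cV}\le\tfrac{C_d}{c_a}\,\|\delta p^{i}\|_{\cHQ}$. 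Subtracting~\eqref{eq:ep:fs:bdf:b} for the iteration indices $i{+}1$ and $i$ and using the frozen-history identity, the history terms cancel and the dependence on $u^{n,i}$ combines with the stabilization into
\[
  \big(\cC+\tfrac{\tau}{\xi_0}\cB+L\cI\big)\,\delta p^{i+1}
  = \big(L\cI-\cD\cA^{-1}\cD^{*}\big)\,\delta p^{i}
  \qquad\text{in }\cQ^{*}.
\]

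The decisive point is to estimate $\delta p^{i+1}$ in $\cHQ$ by treating $L\cI-K$ with $K\vcentcolon=\cD\cA^{-1}\cD^{*}$ as a \emph{single} self-adjoint operator on $\cHQ$ rather than estimating the stabilization $L\cI$ and the coupling $K$ separately---the latter would only reproduce a weak-coupling condition of the type $\omega<1$, whereas the role of the stabilization is precisely to admit large $L$. Symmetry of $a$ makes $K$ self-adjoint, and $\langle Kq,q\rangle_{\cHQ}=a(w,w)$ with $w=\cA^{-1}\cD^{*}q$; combining $a(w,w)\le C_a\|w\|_{\cV}^{2}$ with $\|w\|_{\cV}\le\tfrac{C_d}{c_a}\|q\|_{\cHQ}$ gives $0\le\langle Kq,q\rangle_{\cHQ}\le\tfrac{C_aC_d^{2}}{c_a^{2}}\|q\|_{\cHQ}^{2}$. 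The hypothesis $L\ge\tfrac{C_a^{2}C_d^{2}}{2c_a^{3}}$ (where $C_a\ge c_a$ is used) then forces $0\le K\le 2L\cI$, so that $-L\cI\le L\cI-K\le L\cI$ and $\|L\cI-K\|_{\cHQ\to\cHQ}\le L$. Testing the displayed identity with $\delta p^{i+1}$ and using $c(\delta p^{i+1},\delta p^{i+1})+\tfrac{\tau}{\xi_0}b(\delta p^{i+1},\delta p^{i+1})\ge c_c\|\delta p^{i+1}\|_{\cHQ}^{2}$ (since $b\ge0$) I obtain $(c_c+L)\|\delta p^{i+1}\|_{\cHQ}\le L\|\delta p^{i}\|_{\cHQ}$, i.e.\ a contraction with the $\tau$-independent factor $\tfrac{L}{c_c+L}<1$; the same test also bounds $\tfrac{\tau}{\xi_0}c_b\|\delta p^{i+1}\|_{\cQ}^{2}$. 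Banach's fixed-point theorem applied to the $p$-iteration (with $u^{n,i}$ slaved via $u^{n,i}=\cA^{-1}(f^{n}+\cD^{*}p^{n,i})$) gives convergence of $(u^{n,i},p^{n,i})$ to the unique solution of~\eqref{eq:ep:fs:bdf}; since $\|\delta u^{i}\|_{\cV}$, $\|\delta p^{i}\|_{\cHQ}$, and $\|\delta p^{i}\|_{\cQ}$ all tend to zero geometrically, the left-hand side of~\eqref{eq:iterNumDef} eventually falls below $\tol^{2}$ and $J_n<\infty$.

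The main obstacle is the step just described: one has to recognize that the coupling contribution $\cD\,\delta u^{i}$ should be written back as $\cD\cA^{-1}\cD^{*}\delta p^{i}=K\delta p^{i}$ and absorbed into $L\cI$ \emph{before} estimating, so that $L\cI-K$ is a small perturbation of $L\cI$; this turns a weak-coupling-type restriction into the one-sided, step-size-independent lower bound on $L$ (and, consistently with the rest of the paper, the resulting contraction factor $\tfrac{L}{c_c+L}$ deteriorates as $L$ grows, so that stronger stabilization costs more inner iterations). The remaining ingredients---Lax--Milgram for the two sub-steps, the elliptic bound $\|\delta u^{i}\|_{\cV}\lesssim\|\delta p^{i}\|_{\cHQ}$, and the spectral bound $0\le K\le\tfrac{C_aC_d^{2}}{c_a^{2}}\cI$---are routine.
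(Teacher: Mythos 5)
Your argument is correct and establishes exactly the statement of \Cref{lem:contraction}, but it executes the contraction step differently from the paper. The paper keeps both increments in the system: it tests the two difference equations with $\Delta_u^{n,i}$ and $\Delta_p^{n,i}$, adds them, rewrites the coupling term via $\cD^{*}\Delta_p^{n,i}=\cA\Delta_u^{n,i}$, and then applies a weighted Young inequality (free parameter $\delta$, eventually set to $c_a$) together with the elliptic bounds $\Vert\Delta_u^{n,i}\Vert_{\cV}\le\tfrac{C_d}{c_a}\Vert\Delta_p^{n,i}\Vert_{\cHQ}$; this yields a contraction of the combined quantity $\mathfrak{d}$ --- precisely the left-hand side of the stopping criterion \eqref{eq:iterNumDef} --- with factor $\gamma=\big(\tfrac{L/2}{c_c+L/2}\big)^{1/2}$, which is then fed directly into the iteration count \eqref{eq:numIterations}. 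You instead eliminate $u$ through the Schur complement $K=\cD\cA^{-1}\cD^{*}$ and bound the self-adjoint operator $L\cI-K$ in operator norm by $L$ via $0\le K\le 2L\cI$; the hypothesis on $L$ enters in both proofs for the same reason, namely to dominate $\Vert K\Vert\le C_aC_d^{2}/c_a^{2}\le C_a^{2}C_d^{2}/c_a^{3}$ (in fact the sharper bound $\Vert K\Vert\le C_d^{2}/c_a$ also follows from your computation). What each buys: your route is shorter, makes the spectral role of the stabilization transparent, and is the form in which the fixed-stress operator $\cM=\cC+\cD\cA^{-1}\cD^{*}$ reappears later in the proof of \Cref{th:convergence}; it gives a contraction of $\Vert\Delta_p^{n,i}\Vert_{\cHQ}$ alone, with the $u$- and $\cQ$-components slaved, and a (in that norm sharper) factor $\tfrac{L}{c_c+L}$. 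The paper's route avoids invoking $\cA^{-1}$ and delivers the contraction exactly in the metric used for termination, which is what \eqref{eq:numIterations} quantifies. Your explicit Lax--Milgram well-posedness of the two substeps is a welcome addition that the paper leaves implicit. One shared, harmless bookkeeping point: the identity $\cA\Delta_u^{n,i}=\cD^{*}\Delta_p^{n,i}$ requires \eqref{eq:ep:fs:bdf:a} at two consecutive inner indices, so the contraction strictly speaking begins at $i\ge 2$ when the initial guess is taken from the previous time step.
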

\begin{proof}
	Similar to the first-order in time case~\cite{StoBKNR19}, it can be easily shown that~\eqref{eq:ep:fs:bdf} is contractive for all $n \ge k$.
	To see this, define
	\begin{displaymath}
		\Delta_u^{n,i} \vcentcolon= u^{n,i} - u^{n,i-1} \quad\text{and}\quad \Delta_p^{n,i} \vcentcolon= p^{n,i} - p^{n,i-1}. 
	\end{displaymath}
	Considering the difference between two iterates of~\eqref{eq:ep:fs:bdf} and multiplying the last of the resulting second subsystem by $\tfrac{\tau}{\xi_0}$, we get 
	\begin{subequations}
		\label{eq:contrac:0}
		\begin{align}
			\cA \Delta_u^{n,i} - \cD^{*} \Delta_p^{n,i} &= 0  \qquad \text{in } \cV^{*}, 	\label{eq:contrac:0:a}\\
			\cD\Delta_u^{n,i-1} + \cC \Delta_p^{n,i} + \tfrac{\tau}{\xi_0}\cB \Delta_p^{n,i} + L \cI (\Delta_p^{n,i} - \Delta_p^{n,i-1})&= 0 \qquad \text{in } \cQ^{*}. \label{eq:contrac:0:b}
		\end{align}
	\end{subequations}
	With the choice of test functions $\Delta_u^{n,i}$ in~\eqref{eq:contrac:0:a} and $\Delta_p^{n,i}$ in~\eqref{eq:contrac:0:b} and adding the resulting two equations, we obtain 
	\begin{align*}
		\langle \cA \Delta_u^{n,i}, \Delta_u^{n,i}\rangle + \langle \cC \Delta_p^{n,i}, \Delta_p^{n,i}\rangle + \tfrac{\tau}{\xi_0}\langle \cB& \Delta_p^{n,i}, \Delta_p^{n,i}\rangle + L \langle \Delta_p^{n,i} - \Delta_p^{n,i-1}, \Delta_p^{n,i}\rangle \\
		&\quad= \langle \cD^{*} \Delta_p^{n,i},\Delta_u^{n,i}-\Delta_u^{n,i-1}\rangle \\
		&\quad= \langle \cA \Delta_u^{n,i},\Delta_u^{n,i}-\Delta_u^{n,i-1}\rangle.
	\end{align*}
	With the assumed properties of the bilinear forms, it follows together with the weighted Young's inequality with an arbitrary positive constant $\delta > 0$ that 
	\begin{equation*}
		\begin{aligned}
			c_a\, \Vert \Delta_u^{n,i}\Vert_{\cV}^{2} + c_c\,\Vert &\Delta_p^{n,i} \Vert^{2}_{\cHQ} + \tfrac{\tau}{\xi_0}c_b\, \Vert \Delta_p^{n,i} \Vert^{2}_{\cQ} \nonumber\\
			&+ \tfrac{L}{2}\, \Big(\Vert\Delta_p^{n,i}\Vert_{\cHQ}^{2} - \Vert\Delta_p^{n,i-1}\Vert_{\cHQ}^{2} + \Vert\Delta_p^{n,i}-\Delta_p^{n,i-1}\Vert_{\cHQ}^{2} \Big) \\
			&\qquad\qquad\le C_a \Vert\Delta_u^{n,i}\Vert_{\cV}\Vert\Delta_u^{n,i}-\Delta_u^{n,i-1}\Vert_{\cV}\nonumber\\
			&\qquad\qquad\le \frac{\delta}{2}\, \Vert\Delta_u^{n,i}\Vert_{\cV}^{2} + C_a^2\frac{1}{2\delta}\, \Vert\Delta_u^{n,i}-\Delta_u^{n,i-1}\Vert_{\cV}^{2}.
		\end{aligned}
	\end{equation*}
	From \eqref{eq:contrac:0:a}, we get 
	\begin{align*}
		\Vert \Delta_u^{n,i} \Vert^{2}_{\cV} 
		\le \tfrac{C_d^{2}}{c_a^{2}}\, \Vert \Delta_p^{n,i} \Vert^{2}_{\cHQ}
		\qquad\text{and}\qquad
		\Vert \Delta_u^{n,i} - \Delta_u^{n,i-1} \Vert^{2}_{\cV} 
		\le \tfrac{C_d^{2}}{c_a^{2}}\, \Vert \Delta_p^{n,i} - \Delta_p^{n,i-1}\Vert^{2}_{\cHQ}.
	\end{align*}
	Combining the above estimates yields
	\begin{align}
		\label{contrac:arbit}
		\big(c_a -\tfrac{\delta}{2}\big)\, \Vert \Delta_u^{n,i}\Vert_{\cV}^{2}
		+ \big(&c_c + \tfrac{L}{2}\big)\, \Vert \Delta_p^{n,i} \Vert^{2}_{\cHQ}
		+ \tfrac{\tau}{\xi_0}c_b\, \Vert \Delta_p^{n,i} \Vert^{2}_{\cQ} \nonumber\\
		&+ \big(\tfrac{L}{2} - \tfrac{1}{2\delta}\tfrac{C_a^2 C_d^2}{c_a^2}\big)\,
		\Vert\Delta_p^{n,i}-\Delta_p^{n,i-1}\Vert_{\cHQ}^{2} 
		\le \tfrac{L}{2} \Vert \Delta_p^{n,i-1} \Vert^{2}_{\cHQ}.
	\end{align}
		We need $\delta < 2 c_a$ and $L > \frac{C_a^2 C_d^2}{2c_a^3}$ to ensure
		positivity of the terms on the left-hand side.
		Choosing $\delta = c_a$ and adding positive terms on the right-hand side
		gives
		\begin{align}
			\label{contrac:arbit:1}
			&\Big(
			\tfrac{c_a}{2}\,\Vert \Delta_u^{n,i}\Vert_{\cV}^{2}
			+ \big(c_c + \tfrac{L}{2}\big)\, \Vert \Delta_p^{n,i} \Vert^{2}_{\cHQ}
			+ \tfrac{\tau}{\xi_0}c_b\, \Vert \Delta_p^{n,i} \Vert^{2}_{\cQ}
			\Big)
			+ \tfrac{1}{2} \big(L - \tfrac{C_a^2 C_d^2}{c_a^3}\big)\,
			\Vert\Delta_p^{n,i}-\Delta_p^{n,i-1}\Vert_{\cHQ}^{2}\nonumber\\
			&\quad\qquad\le \frac{\tfrac{L}{2}}{c_c + \tfrac{L}{2}}\ \Big(
			\tfrac{c_a}{2}\,
			\Vert \Delta_u^{n,i-1}\Vert_{\cV}^{2}
			+ \big(c_c + \tfrac{L}{2}\big) \Vert \Delta_p^{n,i-1} \Vert^{2}_{\cHQ}
			+ \tfrac{\tau}{\xi_0}c_b\, \Vert \Delta_p^{n,i-1} \Vert^{2}_{\cQ}	
			\Big).
		\end{align}	
		Choosing the stabilization parameter $L = \frac{C_a^2 C_d^2}{c_a^3}$,
		we obtain 
		\begin{align*}
			\mathfrak{d}\big((u^{n,i},p^{n,i}),(u^{n,i-1},p^{n,i-1})\big) 
			\le\gamma\, \mathfrak{d}\big((u^{n,i-1},p^{n,i-1}),(u^{n,i-2},p^{n,i-2})\big)
		\end{align*}
		where
		\begin{displaymath}
			\gamma^2 = \frac{\tfrac{L}{2}}{c_c + \tfrac{L}{2}} = \frac{C_a^2 C_d^2}{C_a^2 C_d^2 + 2 c_a^3 c_c} < 1
		\end{displaymath}
		and
		\begin{displaymath}
			\mathfrak{d}\big((u^{n,i},p^{n,i}),(u^{n,i-1},p^{n,i-1})\big)
			\vcentcolon= \Big(\tfrac{c_a}{2}\Vert \Delta_u^{n,i}\Vert_{\cV}^{2}
			+ \big(c_c + \tfrac{L}{2}\big) \Vert \Delta_p^{n,i} \Vert^{2}_{\cHQ}
			+ \tfrac{\tau}{\xi_0}c_b\, \Vert \Delta_p^{n,i} \Vert^{2}_{\cQ}\Big)^{1/2}.
			\qedhere
		\end{displaymath}
\end{proof}
Let $\epsilon^{n,1} = \mathfrak{d}((u^{n,1},p^{n,1}),(u^{n,0},p^{n,0}))$ denote the norm of the error at time point $t^{n}$ due to the initial guess for solving \eqref{eq:ep:fs:bdf}. From the termination criterion \eqref{eq:iterNumDef} and the contraction, we have 
\begin{align*}
	\gamma^{J_n -1} \epsilon^{n,1} \le \tol
\end{align*}
such that the number of needed inner iterations is given by
\begin{equation}
	\label{eq:numIterations}
	J_n = \left\lceil \frac{\ln{\tol} - \ln{\epsilon^{n,1}}}{\ln{\gamma}}\right\rceil + 1.
\end{equation}
Minimizing the computational work goes along with minimizing $J_n$.
An optimal choice balances the errors corresponding to the operator splitting and the time discretization. To realize this, we need to know the relevant rates. This is subject of the following theorem, which is the main result of this article. Therein, we show that the iterates of~\eqref{eq:ep:fs:bdf} converge to the fixed-point, which is the exact solution of~\eqref{eq:ellpar:opt}.
\begin{theorem}[convergence result]
	\label{th:convergence}
	Let $u \in \cC^{k}([0,T],\cV)$, $p \in \cC^{k}([0,T],\cHQ)$ be the solution of
	\eqref{eq:ellpar} for sufficiently smooth right-hand sides~$f$ and~$g$.
	Consider $k\in\{1,\ldots,5\}$ and given initial data
	$(u^\ell,p^\ell) \in\cV\times\cHQ$ for $\ell=0,\ldots,k-1$.
	Let $(u^{n,J_{n}}, p^{n,J_{n}})$ for $n\ge k$ be the solution of
	\eqref{eq:ep:fs:bdf} where $J_n$ denotes the number of inner iterations at the
	time point $t^{n}$ as defined in \eqref{eq:iterNumDef} for a
	prescribed tolerance~$\tol > 0$. Then, it holds that
	\begin{align}
		\label{eqn:iterativeBDFconv}
		\big\Vert u^{n,J_{n}} - u(t^{n}) \big\Vert_{\cV}^{2} + \big\Vert p^{n,J_{n}} - p(t^{n}) \big\Vert_{\cHQ}^{2} 
		\lesssim \frac{\tol^{2}}{\tau^{3}} + \tau^{2k} + \sum_{\ell=0}^{k-1}\Vert p^{\ell} - p(t^{\ell})\Vert_{\cHQ}^{2}
	\end{align}
	with a hidden constant including a factor of the form $e^{C t_n}$ with some constant $C>0$. 
\end{theorem}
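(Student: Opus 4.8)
The plan is to split the total error at time~$t^n$ into a \emph{discretization error} $e^n := (u^{n,J_n}, p^{n,J_n}) - (\bar u^n, \bar p^n)$ that would be incurred by a fully coupled \BDF-$k$ step, and an \emph{iteration defect} measuring how far the terminated iterates $(u^{n,J_n}, p^{n,J_n})$ are from the exact fixed point of the decoupled discrete problem~\eqref{eq:ep:fs:bdf} at step~$n$. Here $(\bar u^n, \bar p^n)$ denotes the solution of the fully coupled \BDF-$k$ scheme applied directly to~\eqref{eq:ellpar:opt}, i.e.\ the limit $i\to\infty$ of~\eqref{eq:ep:fs:bdf}. By \Cref{lem:contraction} this limit exists, and by the termination criterion~\eqref{eq:iterNumDef} together with the contraction with factor~$\gamma$, the geometric tail gives
\begin{displaymath}
	\mathfrak{d}\big((u^{n,J_n},p^{n,J_n}),(\bar u^n,\bar p^n)\big)
	\le \frac{\gamma}{1-\gamma}\,\mathfrak{d}\big((u^{n,J_n},p^{n,J_n}),(u^{n,J_n-1},p^{n,J_n-1})\big)
	\lesssim \tol,
\end{displaymath}
so the iteration defect contributes $\lesssim \tol^2$ per step. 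The subtlety is that this bound sits \emph{inside} the \BDF operator at subsequent steps, because $\bdf{k}{p^{n,i}}$ uses the terminated values $p^{n-\ell,J_{n-\ell}}$; hence the defect is amplified by a factor $1/\tau$ (from the difference quotient), which after the Grönwall step produces the term $\tol^2/\tau^3$ — the $\tau^{-2}$ from squaring a $\tau^{-1}$ amplification inside the energy estimate, and an extra $\tau^{-1}$ from the final Grönwall factor $\tfrac{1}{C\tau}$ in \Cref{lem:gronwalls}.

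The main work is the estimate for the coupled error $e^n$, and here the standard trick is to test the error equations with a \emph{shifted} test function $e^n - \eta e^{n-1}$ so that \Cref{prop:summation} applies. Concretely, I would write the error recursion: subtracting~\eqref{eq:ellpar:opt} (evaluated at $t^n$, with the time derivative replaced by $\bdf{k}{\cdot}$ plus a consistency remainder) from the coupled \BDF-$k$ scheme, one gets
\begin{subequations}
\begin{align*}
	\cA e_u^n - \cD^* e_p^n &= 0, \\
	\cD\,\bdf{k}{e_u^n} + \cC\,\bdf{k}{e_p^n} + \cB e_p^n &= r^n,
\end{align*}
\end{subequations}
where $r^n$ is the truncation error of \BDF-$k$, satisfying $\|r^n\|_{\cQ^*}\lesssim \tau^k$ by the smoothness assumption $u,p\in\cC^k$ (more precisely one needs the Peano-kernel bound with the appropriate higher derivatives). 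From the elliptic relation, $\cD\,\bdf{k}{e_u^n}$ can be rewritten using $\cA e_u^n = \cD^* e_p^n$; testing the parabolic error equation with $e_p^n - \eta e_p^{n-1}$, using \Cref{prop:summation} with $\calM = \cC$ (self-adjoint, monotone since $c$ is elliptic) to turn $\langle \tau\cC\,\bdf{k}{e_p^n}, e_p^n-\eta e_p^{n-1}\rangle$ into a telescoping $G$-norm difference plus a controllable $\gamma$-term, and absorbing the $\cB$-term and the coupling term $\cD$-term via ellipticity and Young's inequality, one arrives at an inequality of the form
\begin{displaymath}
	\|E^n\|_{G,\cC}^2 - (1+\tau C)\|E^{n-1}\|_{G,\cC}^2 + (\text{positive } b^n) - b^{n-1} \le C\tau\,\tau^{2k},
\end{displaymath}
with $E^n = (e_p^n,\dots,e_p^{n+1-k})$. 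Applying \Cref{lem:gronwalls} gives $\|e_p^n\|_{\cHQ}^2 \lesssim e^{Ct_n}(\tau^{2k} + \text{starting errors})$, and then the elliptic relation returns $\|e_u^n\|_{\cV}^2 \lesssim \|e_p^n\|_{\cHQ}^2$. The starting errors $\sum_{\ell=0}^{k-1}\|p^\ell - p(t^\ell)\|_{\cHQ}^2$ enter through $a^0 + b^0$ in the Grönwall lemma (note only the $\cHQ$-components of the initial data appear, since $\cC$ controls the $\cHQ$-norm, and the $\cV$-component of the initial error is then slaved through $\cA e_u^0 = \cD^* e_p^0$, consistent with the consistency assumption on the initial data).

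\textbf{Main obstacle.} The delicate point is the bookkeeping of the iteration defect \emph{through} the multistep memory: because step~$n$ uses the terminated (not exact-fixed-point) values from steps $n-1,\dots,n-k$, the coupled-error recursion is not clean — one must either (a) carry a combined quantity "coupled error $+$ accumulated iteration defect" through the Grönwall argument simultaneously, tracking that each defect injection of size $\tol$ enters the \BDF operator divided by $\tau$, or (b) first bound the drift of the actual scheme from the idealized coupled scheme and then add the two errors. Option (a) is cleaner: one proves the energy inequality for the \emph{actual} iterates against the exact solution, where the right-hand side now reads $C\tau(\tau^{2k} + \tol^2/\tau^2)$, and then \Cref{lem:gronwalls} with its $\tfrac{1}{C\tau}$ factor yields exactly $\tau^{2k} + \tol^2/\tau^3$. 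Getting the powers of $\tau$ to line up correctly here — in particular being careful that the $1/\tau$ from $\bdf{k}{\cdot}$ is squared in the energy estimate but the Grönwall lemma only removes one power of $\tau$ — is where the stated exponent $\tol^2/\tau^3$ (equivalently the heuristic $\tol\approx\tau^{k+3/2}$) comes from, and it is the step most prone to error.
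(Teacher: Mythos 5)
Your ``option (a)'' is precisely the paper's proof: a single energy estimate for the terminated iterates against the exact solution, with the iteration increments $e_p^{n,i}-e_p^{n,i-1}$ controlled through the stopping criterion \eqref{eq:iterNumDef}, the shifted test function $e_p^{n,i}-\eta\, e_p^{n-1,J_{n-1}}$ from \Cref{prop:summation}, and a final application of \Cref{lem:gronwalls}. Your preliminary ``option (b)'' decomposition is indeed abandoned for the reason you give (the terminated history contaminates the \BDF memory), so the overall architecture matches.

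One step, as written, would fail. After eliminating $e_u^{n,i}=\cA^{-1}\cD^{*}e_p^{n,i}$, the parabolic error equation contains $\cD\cA^{-1}\cD^{*}\,\tau\,\bdf{k}{e_p^{n,i}}$. You propose to apply \Cref{prop:summation} with $\calM=\cC$ only and to absorb the remaining coupling term ``via ellipticity and Young's inequality.'' But $\tau\,\bdf{k}{e_p^{n,i}}=\sum_{\ell}\xi_\ell\, e_p^{n-\ell,\cdot}$ carries no spare power of $\tau$, so Young's inequality leaves terms of size $\Vert e_p\Vert_{\cHQ}^{2}$ with an $O(1)$ constant; these can neither be absorbed into the left-hand side nor passed to \Cref{lem:gronwalls}, which requires the non-telescoping part to be $O(\tau)\Vert e_p\Vert^{2}$. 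The paper's remedy is to apply \Cref{prop:summation} to the combined operator $\cM=\cC+\cD\cA^{-1}\cD^{*}$ (self-adjoint, elliptic with constant $c_c$), so that the entire term telescopes in the $\Vert\cdot\Vert_{G,\cM}$-norm; since identity \eqref{eqn:BDFtested} is linear in $\calM$, applying it separately to $\cC$ and to $\cD\cA^{-1}\cD^{*}$ would also do, but some such move is indispensable. A second, minor point: your stated per-step source ``$C\tau(\tau^{2k}+\tol^{2}/\tau^{2})$'' is inconsistent with the conclusion you draw from it (it would yield $\tol^{2}/\tau^{2}$ after \Cref{lem:gronwalls}); in the paper the source is $C\tau^{2k+1}+C\,\tol^{2}/\tau^{2}$ — one $1/\tau$ from the Young weight needed to keep the absorbable term at $O(\tau)$, and another because the $\cQ$-part of \eqref{eq:iterNumDef} controls $\Vert p^{n,i}-p^{n,i-1}\Vert_{\cHQ}^{2}$ only up to $\tol^{2}/\tau$ — and the Grönwall factor $1/(C\tau)$ then gives $\tol^{2}/\tau^{3}$, in agreement with your first heuristic count but not with your displayed right-hand side.
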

\begin{proof}
We define the errors of the iterates with respect to the true solution and the defects on approximating the true derivative with \BDF-$k$, namely
\begin{gather*}
	e_{u}^{n,i}\vcentcolon= u^{n,i} - u(t^{n}), \qquad \tau d_{u}^{n}\vcentcolon= \xi_{0} u(t^{n}) + \cdots + \xi_{k} u(t^{n-k}) - \tau {\dot u}(t^{n}),\\
	e_{p}^{n,i}\vcentcolon= p^{n,i} - p(t^{n}), \qquad \tau d_{p}^{n}\vcentcolon= \xi_{0} p(t^{n}) + \cdots + \xi_{k} p(t^{n-k}) - \tau {\dot p}(t^{n}).
\end{gather*}
An application of the Taylor series shows that the defects satisfy the estimates
\begin{displaymath}
	\Vert d_u^{n} \Vert_{\cV} \lesssim \tau^{k}, \qquad \Vert d_p^{n} \Vert_{\cHQ} \lesssim \tau^{k}.
\end{displaymath}
Now consider the difference between~\eqref{eq:ep:fs:bdf} and~\eqref{eq:ellpar:opt}. Making use of
\begin{align*}
	\bdf{k}{y^{n,i}} - \bdf{k}{y^{n,i-1}} 
	&= \tfrac{\xi_0}{\tau}\, \big( y^{n,i} - y^{n,i-1} \big) 
\end{align*}
for $y\in\{u,p\}$, one can show that the errors and the defects satisfy the system 
\begin{subequations}
	\label{eq:ep:fs:err}
	\begin{align}
		\cA e_{u}^{n,i} - \cD^{*} e_{p}^{n,i} 
		&= 0 \qquad \text{in } \cV^{*}, \label{eq:ep:fs:err:a}\\
		\cD\big(\tau\bdf{k}{e_{u}^{n,i}}\big) 
		+ \xi_{0}\cD \big( e_{u}^{n,i-1} - e_{u}^{n,i}\big) 
		+ \cC\big(\tau\bdf{k}{e_{p}^{n,i}}\big) \hspace*{2em}&\nonumber\\
		+ \tau \cB e_{p}^{n,i} + \xi_0 L \cI \big(e_{p}^{n,i} - e_{p}^{n,i-1} \big) + \tau \cD d_u^{n} + \tau \cC d_p^{n}
		&= 0 \qquad \text{in } \cQ^{*}. \label{eq:ep:fs:err:b}
	\end{align}
\end{subequations}
Testing \eqref{eq:ep:fs:err:a} with $e_{u}^{n,i} \in \cV$ and using the properties of the operators, it is easy to see that
\begin{align}
	\label{eq:fs:err:est:0}
	\Vert e_{u}^{n,i} \Vert_{\cV} \le \tfrac{C_d}{c_a} \Vert e_{p}^{n,i} \Vert_{\cHQ}.
\end{align}
Note that \eqref{eq:ep:fs:err:a} holds for all $i>0$, hence solving for~$e_{u}^{n,i}\in\cV$ and $e_{u}^{n,i-1}\in\cV$ and inserting this into~\eqref{eq:ep:fs:err:b} gives
\begin{equation*}
	\begin{aligned}
		(\cC + \cD \cA^{-1}\cD^*)\tau\bdf{k}{e_{p}^{n,i}}
		+ (\xi_0 L \cI - \xi_{0}\cD \cA^{-1}\cD^*) \big( e_{p}^{n,i} - e_{p}^{n,i-1}\big) \hspace*{4em}&\nonumber\\+ \tau \cB e_{p}^{n,i} + \tau \cD d_u^{n} + \tau \cC d_p^{n}
		= 0 \qquad \text{in } \cQ^{*}.
	\end{aligned}
\end{equation*}
Let $\cM\vcentcolon= \cC + \cD \cA^{-1}\cD^{*}$ and note that $\cD \cA^{-1}\cD^{*}$ is self-adjoint and monotone.
Therefore, together with the Riesz isomorphism in~$\cHQ$, the operator~$\cM\colon\cHQ\to\cHQ$ is self-adjoint and elliptic with ellipticity constant $c_c$.
Testing with $e_{p}^{n,i} - \eta e_{p}^{n-1,J_{n-1}} \in \cQ$, where $\eta$ is the appropriate multiplier from \Cref{prop:summation} such that the algebraic criterion \eqref{eqn:BDFtested} is satisfied for some~$G \in\spd{k}$, we get
\begin{align}
	\label{eq:fs:err:0}
	T_1 + T_2 = T_3 + T_4,
\end{align}
where
\begin{align*}
	T_1 &\vcentcolon= \tau\, \big\langle \cM\, \bdf{k}{e_{p}^{n,i}}, e_{p}^{n,i} - \eta e_{p}^{n-1,J_{n-1}} \big\rangle,\\ 
	T_2 &\vcentcolon= \tau\, \big\langle \cB e_{p}^{n,i}, e_{p}^{n,i} - \eta e_{p}^{n-1,J_{n-1}} \big\rangle,\\
	T_3 &\vcentcolon= \big\langle \xi_{0}(\cD \cA^{-1}\cD^* -L \cI )\big(e_{p}^{n,i} - e_{p}^{n,i-1}\big), e_{p}^{n,i} - \eta e_{p}^{n-1,J_{n-1}} \big\rangle,\\
	T_4 &\vcentcolon= -\tau\, \big\langle \cD d_{u}^{n} + \cC d_{p}^{n}, e_{p}^{n,i} - \eta e_{p}^{n-1,J_{n-1}} \big\rangle.
\end{align*}
Define the vector of error terms
\begin{displaymath}
	\renewcommand{\arraystretch}{1.5}
	E_{p}^{n,i} \vcentcolon= \begin{bmatrix}
		e_{p}^{n,i}\\
		e_{p}^{n-1,J_{n-1}}\\
		\vdots\\
		e_{p}^{n+1-k,J_{n+1-k}}
	\end{bmatrix} \in \cHQ^{k},
\end{displaymath}
where the vector takes the same superscript as that of the first error.
Using \Cref{prop:summation}, we have
\begin{subequations}
	\label{eq:fs:err:est}
	\begin{align}
		T_1 &\ge \Vert E_{p}^{n,i} \Vert_{G,\cM}^{2} - \Vert E_{p}^{n-1,J_{n-1}} \Vert_{G,\cM}^{2}.
	\end{align}
	By the ellipticity of~$\calB$, we have
	\begin{align}
		2\,T_2 
		&= 2\tau\, \big\langle \cB e_{p}^{n,i}, e_{p}^{n,i} - \eta\, e_{p}^{n-1,J_{n-1}} \big\rangle \nonumber\\
		&= \tau\, \|e_{p}^{n,i}\|^2_b - \tau\, \| \eta\, e_{p}^{n-1,J_{n-1}}\|^2_b + \tau\, \| e_{p}^{n,i} - \eta\, e_{p}^{n-1,J_{n-1}} \|^2_b \nonumber\\
		&\ge \tau\, \|e_{p}^{n,i}\|^2_b - \tau\eta^2\, \| e_{p}^{n-1,J_{n-1}}\|^2_b.
	\end{align}
	For the third term, we apply the continuity of the appearing operator and the weighted Young inequality with a positive constant $\delta > 0$, leading to 
	\begin{align}	
		T_3 &\le \xi_{0}\big(\tfrac{C_d^2}{c_a} + L\big)\Vert e_{p}^{n,i} - e_{p}^{n,i-1}\Vert_{\cHQ} \Vert e_{p}^{n,i}\Vert_{\cHQ} \nonumber\\
		&\qquad+ \eta\,\xi_{0}\big(\tfrac{C_d^2}{c_a} + L\big)\Vert e_{p}^{n,i} - e_{p}^{n,i-1}\Vert_{\cHQ} \Vert e_{p}^{n-1,J_{n-1}}\Vert_{\cHQ} \nonumber\\
		&\le \tau\frac{\delta}{2}\Vert e_{p}^{n,i}\Vert^{2}_{\cHQ} + \frac{1}{2\tau\delta}\xi_{0}^{2}\big(\tfrac{C_d^2}{c_a} + L\big)^{2}\Vert e_{p}^{n,i} - e_{p}^{n,i-1}\Vert^{2}_{\cHQ} \nonumber\\
		&\qquad + \tau\frac{1}{2}\Vert e_{p}^{n-1,J_{n-1}}\Vert^{2}_{\cHQ} + \frac{1}{2\tau}\eta^2\xi_{0}^2\big(\tfrac{C_d^2}{c_a} + L\big)^{2}\Vert e_{p}^{n,i} - e_{p}^{n,i-1}\Vert^{2}_{\cHQ}.
	\end{align}
	Finally, a similar estimate for the fourth term yields	
	\begin{align}
		T_4 &\le \tau\, C_d\Vert d_{u}^{n}\Vert_{\cV} \Vert e_{p}^{n,i}\Vert_{\cHQ} + \tau\, C_d\,\eta\, \Vert d_{u}^{n}\Vert_{\cV} \Vert e_{p}^{n-1,J_{n-1}}\Vert_{\cHQ} \nonumber\\
		& \qquad+ \tau\, C_c\Vert d_{p}^{n}\Vert_{\cHQ} \Vert e_{p}^{n,i}\Vert_{\cHQ} + \tau\, C_c\,\eta\, \Vert d_{p}^{n}\Vert_{\cHQ} \Vert e_{p}^{n-1,J_{n-1}}\Vert_{\cHQ} \nonumber\\
		&\le \tau\frac{\delta}{2}\Vert e_{p}^{n,i}\Vert^{2}_{\cHQ} + \tau\frac{1}{2}\Vert e_{p}^{n-1,J_{n-1}}\Vert^{2}_{\cHQ}\nonumber\\
		&\qquad  + \tau\,\Big(\frac{1}{\delta} + \eta^2\Big) \big(C_d^2\, \Vert d_{u}^{n}\Vert^{2}_{\cV} + C_c^2\,\Vert d_{p}^{n}\Vert^{2}_{\cHQ}\big). 
	\end{align}
	The combination of~\eqref{eq:fs:err:est} and~\eqref{eq:fs:err:0} gives 
	\begin{align*}
		\Vert E_{p}^{n,i} &\Vert_{G,\cM}^{2} - \Vert E_{p}^{n-1,J_{n-1}} \Vert_{G,\cM}^{2} + \tau\tfrac{1}{2}\, \|e_{p}^{n,i}\|^2_b - \tau\tfrac{1}{2}\eta^2\, \| e_{p}^{n-1,J_{n-1}}\|^2_b\nonumber\\
		&\le \tau \delta \Vert e_{p}^{n,i} \Vert_{\cHQ}^{2} + \tau \Vert e_{p}^{n-1,J_{n-1}} \Vert_{\cHQ}^{2} 
		+ \tau\,\Big(\frac{1}{\delta} + \eta^2\Big) \big(C_d^2\, \Vert d_{u}^{n}\Vert^{2}_{\cV} + C_c^2\, \Vert d_{p}^{n}\Vert^{2}_{\cHQ}\big)\nonumber\\
		&\qquad +\frac{1}{2\tau}\Big(\frac{1}{\delta} + \eta^2\Big)\xi_{0}^{2}\big(\tfrac{C_d^2}{c_a} + L\big)^{2}\Vert e_{p}^{n,i} - e_{p}^{n,i-1}\Vert^{2}_{\cHQ}.
	\end{align*}
\end{subequations}
To absorb the first two terms on the right-hand side, observe that 
\begin{align*}
	\Vert e_{p}^{n,i} \Vert_{\cHQ}^{2} \le
	\frac{\CQtoHsquare}{c_b} \Vert e_{p}^{n,i} \Vert_{b}^{2}
	\quad\text{and}\quad
	c_G \Vert e_{p}^{n-1,J_{n-1}} \Vert_{\cHQ}
	\le \Vert E_{p}^{n-1,J_{n-1}} \Vert_{G,\cM}
\end{align*}
where $c_G$ is the smallest eigenvalue of $G$.
For the last term, we use
\begin{displaymath}
	\Vert e_{p}^{n,i} - e_{p}^{n,i-1}\Vert^{2}_{\cHQ}
	\le \CQtoH \Vert e_{p}^{n,i} - e_{p}^{n,i-1}\Vert^{2}_{\cQ}
\end{displaymath}
and \eqref{eq:iterNumDef}, i.e, 
with the consideration of $i=J_{n}$ inner iteration steps, we obtain
\begin{align*}
	\Vert E_{p}^{n,J_{n}} \Vert_{G,\cM}^{2}
	- &\big(1+\tfrac{1}{c_G}\tau\big)\Vert E_{p}^{n-1,J_{n-1}} \Vert_{G,\cM}^{2}
	+ \tau\,\big(\tfrac{1}{2} - \delta \tfrac{\CQtoHsquare}{c_b}\big)
	\, \|e_{p}^{n,J_{n}}\|^2_b
	- \tfrac{1}{2}\tau\eta^2\, \| e_{p}^{n-1,J_{n-1}}\|^2_b\nonumber\\
	&\le \frac{1}{2}\Big(\frac{1}{\delta} + \eta^2\Big)\xi_{0}^{2}
	\big(\tfrac{C_d^2}{c_a} + L\big)^{2}\frac{\tol^2}{\tau^{2}} 
	+ \,\Big(\frac{1}{\delta} + \eta^2\Big) \big(C_d^2\,
	+ C_c^2\, \big)\tau^{2k + 1}.
\end{align*}
Now, we set 
\begin{displaymath}
	\delta = \frac{1}{2}\frac{c_b}{\CQtoHsquare} \big(1 - \eta^2\big)
\end{displaymath}
in order to make the last two terms on the left hand side amenable for telescoping
sum. 
Using the discrete Gr\"{o}nwall inequality from \Cref{lem:gronwalls} yields
the estimate 
\begin{align*}
	\Vert E_{p}^{n,J_{n}} \Vert_{G,\cM}^{2} + \tau\, \Vert e_{p}^{n,J_{n}}\Vert^2_b \lesssim e^{C t_n} \Big(\frac{\tol^2}{\tau^{3}} + \tau^{2k} + \Vert p^{k-1} - p(t^{k-1})\Vert^2_b + \sum_{\ell=0}^{k-1}\Vert p^{\ell} - p(t^{\ell})\Vert_{\cHQ}^{2}\Big).
\end{align*}
With this and~\eqref{eq:fs:err:est:0}, assertion~\eqref{eqn:iterativeBDFconv}
follows.
\end{proof}
The theorem states that the overall error is bounded by the initial error, a
term of order~$\tau^k$, and a term of order $\tol/\tau^{3/2}$.
Assuming sufficiently good initial data, the optimal choice for the tolerance is
hence~$\tol = C\,\tau^{k+3/2}$ for some positive constant~$C$.
We will investigate this result in more detail numerically in \Cref{sec:numerics:balancing}.
\begin{remark}
	\label{rem:implicit}
	In the derivation of the above error estimate, if we drop the index for the iteration and hence not use the contraction condition, we recover the derivation of error estimates for the fully implicit time discretization.
	In more detail, let $u^{n}, p^{n}$ denote the solution of fully implicit time discretization. Then
	\begin{align*}
		\big\Vert u^{n} - u(t^{n}) \big\Vert_{\cV}^{2} + \big\Vert p^{n} - p(t^{n}) \big\Vert_{\cHQ}^{2} 
		\lesssim \tau^{2k} + \sum_{\ell=0}^{k-1}\Vert p^{\ell} - p(t^{\ell})\Vert_{\cHQ}^{2}
	\end{align*}
	with same hidden constants as in \Cref{th:convergence} only depending on the \BDF method and the operators of the original problem~\eqref{eq:ellpar:opt}.
\end{remark}
\begin{remark}
	\label{rem:modifyThm}
	The above proof does not require the operator $\cB$ to be self-adjoint.
	In particular, the above result holds true also for the multiple network case
	considered in \Cref{exp:network} where the operator $\cB$ is modified to include
	the network exchange terms with additional assumptions to make it
	elliptic, cf. \cite[Lem.~2.3]{AltMU21c}.
\end{remark}
%
%
\section{Numerical Experiments}
\label{sec:numerics}
In this section, we gather a number of numerical experiments illustrating
the convergence results of the paper derived in
\Cref{sec:numerics:orders,sec:numerics:balancing}.
Moreover, we do a parametric study of the number of inner iteration steps in
\Cref{sec:numerics:iterations} and apply the proposed decoupling strategy to a
three-dimensional brain model in \Cref{sec:numerics:brain}.
We use the FEniCSx~\cite{BarDDHHRRSSW23} finite element library for the experiments
and use direct solvers (standard LU decomposition from the PETSc library) for all the experiments.
%
\subsection{Convergence studies}
\label{sec:numerics:orders}

In this first experiment, we consider a manufactured two-dimensional problem similar
to the one in \cite{ErnM09} but with homogeneous Dirichlet boundary conditions.
On the unit square~$\Omega=(0,1)^2$ we set 
\begin{align*}
	u(t,x,y) = -10 \mathrm{e}^{-\tfrac{1}{5} t}\sin(\pi x) \sin(\pi y)
	\begin{bmatrix}1\\1\end{bmatrix}, 
	\quad p(t,x,y) = 10 \mathrm{e}^{-\tfrac{1}{5} t}\sin(\pi x) \sin(\pi y).
\end{align*}
The poroelasticity parameters are chosen as in \Cref{tab:convergenceParameters}.
\begin{table}
    \centering
    \caption{Physical parameters for the two-dimensional poroelasticity convergence study.}
    \label{tab:convergenceParameters}
	\begin{tabular}{@{\quad}c@{\qquad\qquad}c@{\qquad\qquad}c@{\qquad\qquad}c@{\qquad\qquad}c@{\quad}}
		\toprule
		$\lambda$ & $\mu$         & $\tfrac{\kappa}{\nu}$ & $\tfrac{1}{M}$ & $\alpha$   \\ \midrule
		$0.5$ & $0.125$ & $0.05$          & $4$      & $0.75$ \\
		\bottomrule
	\end{tabular}
\end{table}
To demonstrate the convergence of the (decoupled) \BDF-$k$ methods for
$1 \le k \le 5$, we need to ensure that the errors due to the spatial 
discretization are very small.
Therefore, we choose continuous $(P_6, P_5)$ finite elements for $(u,p)$ with mesh
size $h=2^{-6}$.
The manufactured solution is also used to set the needed initial data for all the
multistep methods.

The tolerance~$\tol$ is chosen sufficiently small, i.e., $10^{-3-3k}$ for each $1 \le k \le 5$, respectively.
The maximum of the errors over a finite number of time points in the interval
$[0,10]$ are plotted in \Cref{fig:errorComparToy} and clearly indicate the expected rates. 
As a benchmark, the errors are compared with respect to fully coupled methods, i.e.,
implicit \BDF-$k$ schemes applied to the fully coupled problem (see \Cref{rem:implicit}). 
\begin{figure}[ht]
	\centering
	\begin{subfigure}[t]{.45\linewidth}
		\begin{tikzpicture}

  \begin{loglogaxis}[
    width=2.7in,
    height=2.7in,
    xmin=3.0e-02, xmax=8.0e-01,
    ymin=1.5e-12, ymax=5.0e-01,
    xtick={0.625, 0.3125, 0.15625, 0.078125, 0.0390625},
    xticklabels={$2^{-4}$,$2^{-5}$,$2^{-6}$,$2^{-7}$, $2^{-8}$},
    yticklabels={},
    xlabel={step size $\tau \times 10$},
    ylabel={\(\displaystyle\max_{n} \Vert u(t^{n}) - u^{n} \Vert_{\cV}\)},
    ylabel style={at={(-0.01,0.5)}},
    xmajorgrids,
    ymajorgrids,
    legend style={
      at={(0.5,-0.1)},
      anchor=north,
      /tikz/every even column/.append style={column sep=0.2cm},
    },
    legend columns=5
    legend to name=legToy, 
    cycle list name=methodcompare,
    ]
    \addplot+
    table[x={tau}, y={EPfBDF1}] {conv_u_H1.dat};
    \addplot+
    table[x={tau}, y={EPfBDF2}] {conv_u_H1.dat};
    \addplot+
    table[x={tau}, y={EPfBDF3}] {conv_u_H1.dat};
    \addplot+
    table[x={tau}, y={EPfBDF4}] {conv_u_H1.dat};
    \addplot+
    table[x={tau}, y={EPfBDF5}] {conv_u_H1.dat};
    \addplot+
    table[x={tau}, y={EPiBDF1}] {conv_u_H1.dat};
    \addplot+
    table[x={tau}, y={EPiBDF2}] {conv_u_H1.dat};
    \addplot+
    table[x={tau}, y={EPiBDF3}] {conv_u_H1.dat};
    \addplot+
    table[x={tau}, y={EPiBDF4}] {conv_u_H1.dat};
    \addplot+
    table[x={tau}, y={EPiBDF5}] {conv_u_H1.dat};
    \addplot
    table[x={tau}, y={ORDER1}] {conv_u_H1.dat};
    \addplot
    table[x={tau}, y={ORDER2}] {conv_u_H1.dat};
    \addplot
    table[x={tau}, y={ORDER3}] {conv_u_H1.dat};
    \addplot
    table[x={tau}, y={ORDER4}] {conv_u_H1.dat};
    \addplot
    table[x={tau}, y={ORDER5}] {conv_u_H1.dat};
  \end{loglogaxis}
\end{tikzpicture}
  
	\end{subfigure}
	\hspace{-0.95cm}
	\begin{subfigure}[t]{.45\linewidth}
		\begin{tikzpicture}

  \begin{loglogaxis}[
    width=2.7in,
    height=2.7in,
    xmin=3.0e-02, xmax=8.0e-01,
  	ymin=1.5e-12, ymax=5.0e-01,
    xtick={0.625, 0.3125, 0.15625, 0.078125, 0.0390625},
    xticklabels={$2^{-4}$,$2^{-5}$,$2^{-6}$,$2^{-7}$, $2^{-8}$},
    yticklabel style={at={(-0.7,0.5)}},
    xlabel={step size $\tau \times 10$},
  	ylabel={\(\displaystyle\max_{n} \Vert p(t^{n}) - p^{n}\Vert_{\cHQ}\)},
    ylabel style={at={(1.15,0.5)}},
    xmajorgrids,
    ymajorgrids,
    legend style={
      at={(0.5,-0.1)},
      anchor=north,
      /tikz/every even column/.append style={column sep=0.5cm},
    },
    legend columns=5,
    legend to name=legToy, 
    cycle list name=methodcompare,
    ]
  \addplot+
  table[x={tau}, y={EPfBDF1}] {conv_p_L2.dat};
  \addlegendentry{BDF1}
  \addplot+
  table[x={tau}, y={EPfBDF2}] {conv_p_L2.dat};
  \addlegendentry{BDF2}
  \addplot+
  table[x={tau}, y={EPfBDF3}] {conv_p_L2.dat};
  \addlegendentry{BDF3}
  \addplot+
  table[x={tau}, y={EPfBDF4}] {conv_p_L2.dat};
  \addlegendentry{BDF4}
  \addplot+
  table[x={tau}, y={EPfBDF5}] {conv_p_L2.dat};
  \addlegendentry{BDF5}
  \addplot+
  table[x={tau}, y={EPiBDF1}] {conv_p_L2.dat};
  \addlegendentry{BDF1}
  \addplot+
  table[x={tau}, y={EPiBDF2}] {conv_p_L2.dat};
  \addlegendentry{BDF2}
  \addplot+
  table[x={tau}, y={EPiBDF3}] {conv_p_L2.dat};
  \addlegendentry{BDF3}
  \addplot+
  table[x={tau}, y={EPiBDF4}] {conv_p_L2.dat};
  \addlegendentry{BDF4}
  \addplot+
  table[x={tau}, y={EPiBDF5}] {conv_p_L2.dat};
  \addlegendentry{BDF5}
  \addplot
  table[x={tau}, y={ORDER1}] {conv_p_L2.dat};
  \addlegendentry{order 1}
  \addplot
  table[x={tau}, y={ORDER2}] {conv_p_L2.dat};
  \addlegendentry{order 2}
  \addplot
  table[x={tau}, y={ORDER3}] {conv_p_L2.dat};
  \addlegendentry{order 3}
  \addplot
  table[x={tau}, y={ORDER4}] {conv_p_L2.dat};
  \addlegendentry{order 4}
  \addplot
  table[x={tau}, y={ORDER5}] {conv_p_L2.dat};
  \addlegendentry{order 5}
\end{loglogaxis}
\end{tikzpicture}
	\end{subfigure}\\
	\ref*{legToy} 
	\caption{Errors for \BDF-$k$ for $1 \le k \le 5$ for a
	fixed spatial mesh with mesh size~$h=2^{-6}$.
	The solid blue curves correspond to the newly introduced decoupled
	methods whereas the dashed red curves correspond to the fully coupled methods
	(original \BDF schemes).
	Left: $\cV$-error in the displacement~$u$.
	Right: $\cHQ$-error in the pressure~$p$.}
	\label{fig:errorComparToy}
\end{figure}
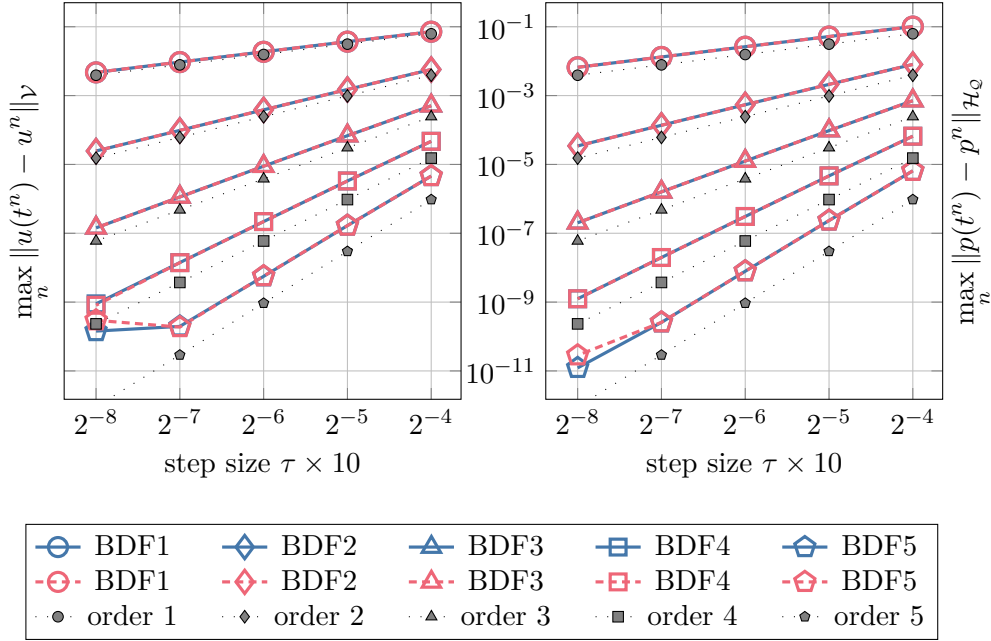
%
\subsection{Error balancing}
\label{sec:numerics:balancing}
As already discussed in the end of \Cref{sec:iterativeBDF}, the optimal choice for
$\tol$ in order to obtain a balanced overall error is~$\tol = C\,\tau^{k+3/2}$.
To verify this choice with $C=1$, we consider the same manufactured problem as in
\Cref{sec:numerics:orders} but with $T=1$.
Here, however, for the spatial discretization, we pick $(P_{k+2}, P_{k+1})$
finite elements for the pair $(u,p)$ with $h=2^{-7}$ which is sufficient
to keep the spatial error components small.
Let
\begin{displaymath}
	\text{error} 
	= \max_{n} \Big(\Vert u(t^{n}) - u^{n}\Vert_{\cV} + \Vert p(t^{n}) - p^{n}\Vert_{\cHQ}\Big)
\end{displaymath}
denote the maximum of the errors between a discrete solution: either fully implicit or iterative
(with the index of iteration dropped) and the true solution computed on a finite number of time points.
In \Cref{fig:expSearchC}, the value of this $error$ is plotted for the choices
$\tol= \tau^{s}$, where $s\in \{k, k +\tfrac{1}{2}, k+1, k +\tfrac{3}{2}, k+2\}$.
Exemplary for \BDF-$1$, \BDF-$2$, we see that this prediction indeed
holds true.
When the tolerances are bigger there is no convergence.
Whenever the $\tol$ is at most $\tau^{k+3/2}$, convergence is observed and the errors are close to the errors which arise for a fully implicit method.
For \BDF-$3$, we do not see this sharpness. 

In \Cref{tab:averageIters}, the average number of inner iterations $J_n$
per time step is reported.
When either the time step size decreases or the order of the method increases, then
the error that can be reached becomes smaller.
This is reflected in a growing number of inner iterations required to reach these
potentially smaller errors in the time discretization.
Furthermore, picking a too small tolerance results in unnecessary iterations.
One more parameter that helps in controlling the number of iterations is the
stabilization parameter $L$, whose optimality is discussed in~\cite{StoBKNR19}.
The true optimal choice, however, depends on the elasticity operator, the fluid
operators, and the time step size.
We investigate this in the next subsection with a toy problem.
\begin{figure}
	\centering
	\hspace{-2em}
	\begin{subfigure}[t]{.3\linewidth}
		\begin{tikzpicture}

  \begin{loglogaxis}[
    width=2in,
    height=2in,
    xmin=1.2e-03, xmax=8.0e-02,
    ymin=1.5e-4, ymax=8.0e-02,
    xtick={0.0625, 0.015625, 0.00390625}, 
    xticklabels={$2^{-4}$,$2^{-6}$, $2^{-8}$},
    xlabel={step size $\tau$},
    xmajorgrids,
    ymajorgrids,
    legend style={
      at={(0.5,-0.1)},
      anchor=north,
      /tikz/every even column/.append style={column sep=0.3cm},
    },
    cycle list name=itertol,
    ]
  \addplot
  table[x={tau}, y={EXP1.00e+00}] {itertol_BDF1_error.dat};
  \addplot
  table[x={tau}, y={EXP1.50e+00}] {itertol_BDF1_error.dat};
  \addplot
  table[x={tau}, y={EXP2.00e+00}] {itertol_BDF1_error.dat};
  \addplot
  table[x={tau}, y={EXP2.50e+00}] {itertol_BDF1_error.dat};
 \addplot
 table[x={tau}, y={EXP3.00e+00}] {itertol_BDF1_error.dat};
  \addplot
  table[x={tau}, y={ORDER}] {itertol_BDF1_error.dat};
  \addplot
  table[x={tau}, y={EPi}] {itertol_BDF1_error.dat};
\end{loglogaxis}
\end{tikzpicture}
	\end{subfigure}
	\begin{subfigure}[t]{.3\linewidth}
		\begin{tikzpicture}

  \begin{loglogaxis}[
    width=2in,
    height=2in,
    xmin=1.2e-03, xmax=8.0e-02,
    ymin=3.e-8, ymax=5.0e-03,
    xtick={0.0625, 0.015625, 0.00390625}, 
    xticklabels={$2^{-4}$,$2^{-6}$, $2^{-8}$},
    xlabel={step size $\tau$},
    xmajorgrids,
    ymajorgrids,
    cycle list name=itertol,
    ]
  \addplot
  table[x={tau}, y={EXP2.00e+00}] {itertol_BDF2_error.dat};
  \addplot
  table[x={tau}, y={EXP2.50e+00}] {itertol_BDF2_error.dat};
  \addplot
  table[x={tau}, y={EXP3.00e+00}] {itertol_BDF2_error.dat};
  \addplot
  table[x={tau}, y={EXP3.50e+00}] {itertol_BDF2_error.dat};
  \addplot
  table[x={tau}, y={EXP4.00e+00}] {itertol_BDF2_error.dat};
  \addplot
  table[x={tau}, y={ORDER}] {itertol_BDF2_error.dat};
  \addplot
  table[x={tau}, y={EPi}] {itertol_BDF2_error.dat};
\end{loglogaxis}
\end{tikzpicture}
	\end{subfigure}
	\begin{subfigure}[t]{.3\linewidth}
		\begin{tikzpicture}

  \begin{loglogaxis}[
    width=2in,
    height=2in,
    xmin=1.2e-03, xmax=8.0e-02,
    ymin=1.1e-10, ymax=9.0e-04,
    xtick={0.0625, 0.015625, 0.00390625}, 
    xticklabels={$2^{-4}$,$2^{-6}$, $2^{-8}$},
    xlabel={step size $\tau$},
    ylabel={error},
    ylabel style={at={(1.2,0.49)}},
    xmajorgrids,
    ymajorgrids,
    legend style={
      at={(0.5,-0.1)},
      anchor=north,
      /tikz/every even column/.append style={column sep=0.3cm},
    },
    legend columns=3,
    legend to name=legITERTOLC, 
    cycle list name=itertol,
    ]
  \addplot
  table[x={tau}, y={EXP3.00e+00}] {itertol_BDF3_error.dat};
  \addlegendentry{$\tol\cong\tau^{k}$}
  \addplot
  table[x={tau}, y={EXP3.50e+00}] {itertol_BDF3_error.dat};
  \addlegendentry{$\tol\cong\tau^{k+1/2}$}
  \addplot
  table[x={tau}, y={EXP4.00e+00}] {itertol_BDF3_error.dat};
  \addlegendentry{$\tol\cong\tau^{k+1}$}
  \addplot
  table[x={tau}, y={EXP4.50e+00}] {itertol_BDF3_error.dat};
  \addlegendentry{$\tol\cong\tau^{k+3/2}$}
 \addplot
 table[x={tau}, y={EXP5.00e+00}] {itertol_BDF3_error.dat};
  \addlegendentry{$\tol\cong\tau^{k+2}$}
  \addplot
  table[x={tau}, y={ORDER}] {itertol_BDF3_error.dat};
  \addlegendentry{order (1, 2 or 3)}
  \addplot
  table[x={tau}, y={EPi}] {itertol_BDF3_error.dat};
  \addlegendentry{implicit error}
\end{loglogaxis}
\end{tikzpicture}
	\end{subfigure}
	\ref*{legITERTOLC} 
	\caption{Errors for \BDF-$1$ (left), \BDF-$2$ (middle), and \BDF-$3$
	(right) for a fixed spatial mesh size~$h=2^{-7}$.
	The gray line indicate orders $1$ (left), $2$ (middle), and $3$ (right). }
	\label{fig:expSearchC}
\end{figure}
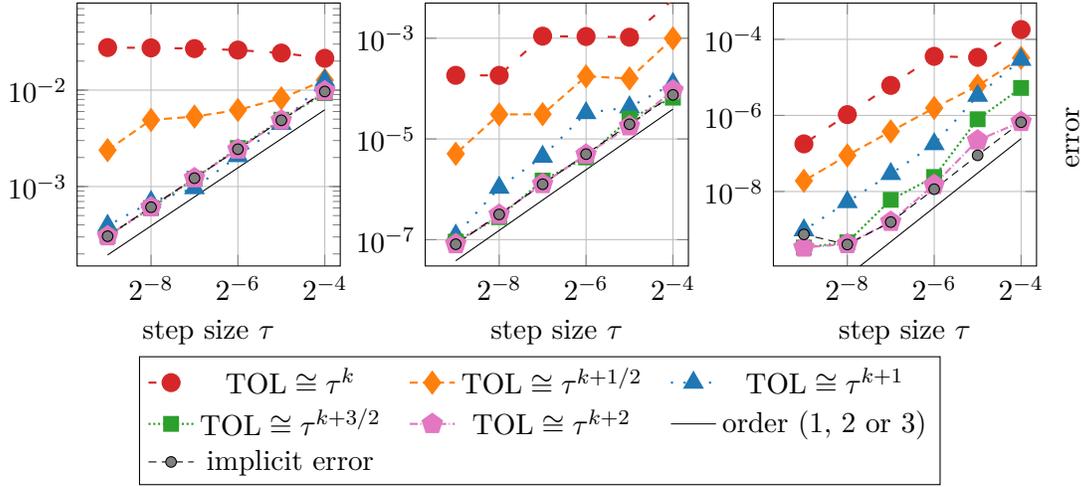
\begin{table}
	\caption{Average number of inner iteration steps $J_n$ per time step.}
	\label{tab:averageIters}
	\sisetup{minimum-decimal-digits=1,exponent-mode = fixed, fixed-exponent = 0,round-mode=places,round-precision=1}
	\begin{tabular}{l@{\quad}l|@{\qquad}l@{\qquad}l@{\qquad}l@{\qquad}l@{\qquad}l@{\qquad}l}
		\toprule
	\multicolumn{1}{l}{}  & \hspace{1.3cm}$\tau$         & $2^{-4}$ & $2^{-5}$ & $2^{-6}$ & $2^{-7}$ & $2^{-8}$ & $2^{-9}$ \\\midrule
	\multirow{3}{*}{\BDF-$1$} & $\tol = \tau^{k+1}$ 	& \num{5}	& \num{5}	& \num{5}	& \num{6}		& \num{6}& \num{7}\\
						   & $\tol = \tau^{k+3/2}$ 		& \num{5}	& \num{6}	& \num{7}	& \num{7}		& \num{8}& \num{8}\\
						   & $\tol = \tau^{k+2}$ 		& \num{6}	& \num{7}	& \num{8}	& \num{8.7}		& \num{9}& \num{10}\\\midrule
	\multirow{3}{*}{\BDF-$2$} & $\tol = \tau^{k+1}$		& \num{6}	& \num{7}	& \num{8}	& \num{8.83}	& \num{9} & \num{10}\\
						   & $\tol = \tau^{k+3/2}$ 		& \num{7}	& \num{8}	& \num{9}	& \num{10}		& \num{11}& \num{12}\\
						   & $\tol = \tau^{k+2}$ 		& \num{8}	& \num{9}	& \num{10}	& \num{11}		& \num{12.1}& \num{14}\\\bottomrule
	\end{tabular}
\end{table}
\subsection{Needed iteration steps to reach \texorpdfstring{$\tol$}{TOL}}
\label{sec:numerics:iterations}
To study the sharpness of the estimates for the number of 
iterations~\eqref{eq:numIterations} as a function of the stabilization parameter,
we consider the scalar toy problem  
\begin{displaymath}
	a(u,v) = v^{\top}\! A u, \qquad 
	d(v,p) = \sqrt{\omega}\, p D v, \qquad 
	c(p,q) = q C p, \qquad 
	b(p,q) = q B p,
\end{displaymath}
where
\begin{align*}
	A = \frac{1}{2-\sqrt{2}} \begin{bmatrix}
		2 & -1 & 0 \\
		-1 & 2 & -1\\
		0 & -1 & 2
	\end{bmatrix}, \qquad
	B = 1, \qquad
	C = 1, \qquad
	D = \begin{bmatrix}
		\frac{2}{3} & \frac{1}{3} & \frac{2}{3}
	\end{bmatrix}.
\end{align*}
As right-hand sides, we set	
\begin{align*}
	f(t) \equiv \begin{bmatrix}
		1 \\ 1 \\ 1
	\end{bmatrix}, \qquad
	g(t) = 100 \sin(t).
\end{align*}
In this case, system~\eqref{eq:contrac:0} simplifies to
\begin{align}
	\label{eq:toycontraction}
	\Delta_p^{n,i} 
	= \gamma \Delta_p^{n,i-1}
	\qquad\text{with}\qquad
	\gamma 
	= \frac{L - \omega D\,A^{-1}D^{\top}}{L + C + \tfrac{\tau}{\xi_0} B}.
\end{align}
For a stronger contraction, one can prescribe a smaller contraction 
constant $\gamma$, but then the stabilization parameter has to be chosen accordingly.
Rewriting~\eqref{eq:toycontraction}, we see that the stabilization parameter should be set as  
\begin{equation}
	\label{eq:stabParam}
	L = \frac{\omega}{1-\gamma}\, D\,A^{-1}D^{\top}
		+ \frac{\gamma}{1 - \gamma}\, \big(C + \tfrac{\tau}{\xi_0} B\big).
\end{equation}
For \BDF-$1$ and \BDF-$2$ and a given coupling strength $\omega$,
we study the average number of inner iterations (per time step) needed as a function of the time step size $\tau$ and the contraction constant $\gamma$.
This is reported in \Cref{tab:iterCounts}. 
For this toy example, the absolute tolerance $\tol$ for the methods is set based on the average error achieved by a fully implicit \BDF method of the same order, which happens to be different for different $\omega$.
The initial data for \BDF-$2$ is computed from the implicit \BDF-$1$ method, which is locally accurate up to order two.

We make the following  observations.
First, we observe less inner iterations per time step if the contraction constant~$\gamma$ is decreased, which yields a corresponding stabilization parameter $L$ as in~\eqref{eq:stabParam}. Second, if we fix $\gamma$ but increase the order of the method, also the number of inner iteration increases. Third, if the time step size is decreased, then the number of inner iterations increases. This is due to the choice of~$\tol$. Finally, as the stabilization parameter depends on the coupling strength $\omega$, the number of inner iterations are almost identical for different $\omega$. All observations are as expected from the previous discussion.
\begin{table}
	\caption{Average number of inner iterations per time step needed as a function of the time step size $\tau$ for different methods, coupling strengths~$\omega$, and contraction constants~$\gamma$. The resulting stabilization parameter $L$ is given by~\eqref{eq:stabParam}.}
	\label{tab:iterCounts}
	\begin{tabular}{@{\quad}l@{\quad}c@{\quad}l@{\qquad}|@{\qquad}l@{\qquad}l@{\qquad}l@{\qquad}l@{\qquad}l@{\qquad}l@{\quad}}
		\toprule
						    \multicolumn{2}{l}{}        & \hspace{2em}$\tau$                  & $2^{-3}$ & $2^{-4}$ & $2^{-5}$ & $2^{-6}$ & $2^{-7}$ & $2^{-8}$ \\\midrule
	\multirow{4}{*}{\BDF-1} & \multirow{2}{*}{$\omega=2$} & $\gamma = 1/2$  & $9$      & $13$     & $13$                         & $16$     & $19$     & $22$                         \\
						   &                             & $\gamma = 1/10$ & $5$      & $6$      & $7$                          & $7$      & $8$      & $9$                          \\\cmidrule{2-9}
						   & \multirow{2}{*}{$\omega=4$} & $\gamma = 1/2$  & $9$      & $12$     & $15$                         & $17$     & $20$     & $23$                         \\
						   &                             & $\gamma = 1/10$ & $5$      & $6$      & $7$                          & $7$      & $8$      & $9$                          \\ \midrule
	\multirow{4}{*}{\BDF-2} & \multirow{2}{*}{$\omega=2$} & $\gamma = 1/2$  & $13$     & $15$     & $18$                         & $22$     & $26$     & $30$                         \\
						   &                             & $\gamma = 1/10$ & $6$      & $7$      & $8$                          & $9$      & $11$     & $12$                         \\\cmidrule{2-9}
						   & \multirow{2}{*}{$\omega=4$} & $\gamma = 1/2$  & $12$     & $16$     & $20$                         & $24$     & $28$     & $32$                         \\
						   &                             & $\gamma = 1/10$ & $6$      & $7$      & $8$                          & $9$      & $10$     & $12$ \\\bottomrule
	\end{tabular}
\end{table}

\subsection{Biomechanics multiple-network problem}
\label{sec:numerics:brain}

To illustrate the tuning of decoupling for practical applications, we pick a network
problem -- see \Cref{exp:network} -- with three fluid pressures from biomechanics
(similar to the one in \cite{EliRT23}) and apply a \BDF-$2$ method.
The three fluid networks are the so-called \emph{arteriole}, \emph{venous}, and
\emph{perivascular} networks and the fluid is allowed to flow from the arteriole to
the venous and perivascular networks.
In \Cref{lem:contraction}, we have $\gamma^2=\frac{L}{2c_c + L}$ for a single pressure network.
For the network case, we set $c_c$ to be the minimum over $c_c$ of each fluid network.
Since \BDF-$2$ is a multistep method, we pick \BDF-$1$ (which is second-order
accurate locally) to initialize the needed data.
For the computational domain, a processed coarse brain mesh is taken
from~\cite{LeePMR19,PieR18}, which has $99\,605$ cells and $29\,037$ vertices;
see \Cref{fig:brainMesh:mesh}.
The boundary is such that $\partial\Omega = \partial\Omega_s \cup \partial\Omega_v$,
where $\partial\Omega_s$ is the boundary near the skull and $\partial\Omega_v$ is
the inner boundary that encloses the ventricle.

For the source terms, we set $g_1(t,x) = \tfrac{1}{2}(1 - \cos(2\pi t))$,
$g_2 \equiv g_3 \equiv 0$, and $f \equiv 0$.
The initial data for all fields are assumed to be zero.
Finally, the boundary conditions read
\begin{align*}
	u &= 0 &&\text{on}~\partial\Omega_s, &  
	(\sigma(u) - {\textstyle\sum_{j=1}^{3}}\alpha_j p_j)\cdot \mathbf{n} 
	&= 10 \mathbf{n} &&\text{on}~\partial\Omega_v,\\
	\nabla p_1\cdot \mathbf{n} &= 0 &&\text{on}~\partial\Omega, &
	p_2 &= 0  &&\text{on}~\partial\Omega, \\
	p_3 &= 10  &&\text{on}~\partial\Omega.
\end{align*}
with $\mathbf{n}$ denoting the outer unit normal vector.
The material parameters are given in \Cref{tab:networkParameters} in SI units
(see~\cite{EliRT23} for units).
We use P$1$ finite elements for all the variables.
Solving using a direct solver over the interval $[0,1]$ with $64$ time steps,
the fields at the final time are plotted in \Cref{fig:brainFields}.
With a reference solution computed using the fully implicit \BDF-$2$ method with $\tau=2^{-9}$, the convergence history is shown in \Cref{fig:brainMesh:conv}.

\begin{figure}
	\centering
	\begin{subfigure}[t]{0.42\linewidth}
		\includegraphics[width=\linewidth]{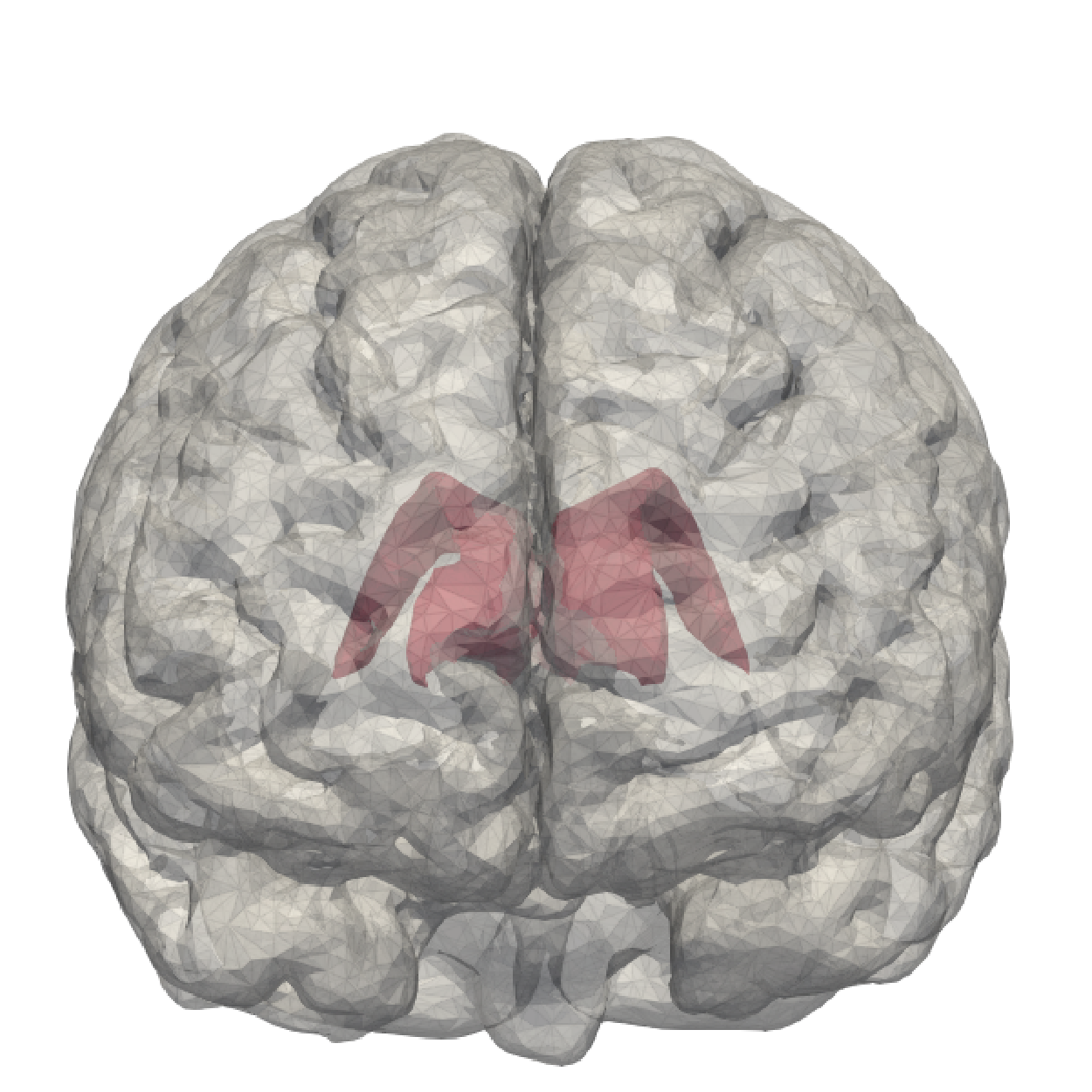}
		\caption{Computational mesh}
		\label{fig:brainMesh:mesh}
	\end{subfigure}
	\hfill
	\begin{subfigure}[t]{0.56\linewidth}
		\begin{tikzpicture}

    \begin{loglogaxis}[
      width=2.35in,
      height=2.35in,
      xmin=2.8e-03, xmax=8.0e-02,
      ymin=1.0e-4, ymax=0.6,
      xtick={0.0625, 0.03125, 0.015625, 0.0078125, 0.00390625}, 
      xticklabels={$2^{-4}$,$2^{-5}$,$2^{-6}$,$2^{-7}$, $2^{-8}$,}, 
      xlabel={step size $\tau$},
      ylabel={error},
      ytick pos=left,
      xmajorgrids,
      ymajorgrids,
      legend pos=outer north east,
      legend cell align=left,
      cycle list name=itertol,
      ]
    \addplot
    table[x={tau}, y={NetworkEPfBDF2}] {brain_p1_L2.dat};
    \addlegendentry{$\|p_1\|_{\cHQ}$}
    \addplot
    table[x={tau}, y={NetworkEPfBDF2}] {brain_p2_L2.dat};
    \addlegendentry{$\|p_2\|_{\cHQ}$}
    \addplot
    table[x={tau}, y={NetworkEPfBDF2}] {brain_p3_L2.dat};
    \addlegendentry{$\|p_3\|_{\cHQ}$}
    \addplot
    table[x={tau}, y={NetworkEPfBDF2}] {brain_u_H1.dat};
    \addlegendentry{$\|u\|_{\cV}$}
    \addplot
    table[x={tau}, y={ORDER2}] {brain_u_H1.dat};
    \addlegendentry{order\,2}
  \end{loglogaxis}
  \end{tikzpicture}
  
  
		\caption{Convergence history}
		\label{fig:brainMesh:conv}
	\end{subfigure}
	\caption{\textbf{(a)} The red part corresponds to ventricular boundary
	$\partial\Omega_v$, whereas the outer boundary is $\partial\Omega_s$.
	\textbf{(b)} Convergence history for all four fields.}
	\label{fig:brainMesh}
\end{figure}

\begin{table}
	\centering
	\caption{Parameters for the three-dimensional multiple-network problem.}
	\label{tab:networkParameters}
	\begin{tabular}{c|r@{\hspace{0.3em}}l@{\quad}r@{\hspace{0.3em}}l@{\quad}r@{\hspace{0.3em}}l}
		& \multicolumn{2}{l}{arteriole network} & \multicolumn{2}{l}{venous network} &  \multicolumn{2}{l}{perivascular network}\\\midrule
		$\lambda$\hspace{0.25cm}		= \num{9.08e+3}	& $\kappa_1/\nu_1=$	& \num{3.74e-8}	& $\kappa_2/\nu_2=$	& \num{3.74e-8} & $\kappa_3/\nu_3=$	& \num{1.43e-7}\\
		$\mu$\hspace{0.25cm}		= \num{5.48e+2}	& $1/M_1=$ 			& \num{2.90e-4}	& $1/M_2=$ 			& \num{1.50e-5} & $1/M_3=$ 			& \num{2.90e-4}\\
		$\beta_{12}$ 	= \num{1.0e-3}	& $\alpha_1=$ 		& \num{0.4} 	& $\alpha_2=$		& \num{0.2} 	& $\alpha_3=$		& \num{0.4}\\
		$\beta_{13}$ 	= \num{1.0e-4} 
	\end{tabular}
\end{table}

\begin{figure}[ht]
	\centering
	\begin{subfigure}[t]{.48\linewidth}
		\centering
		\includegraphics[width=\linewidth]{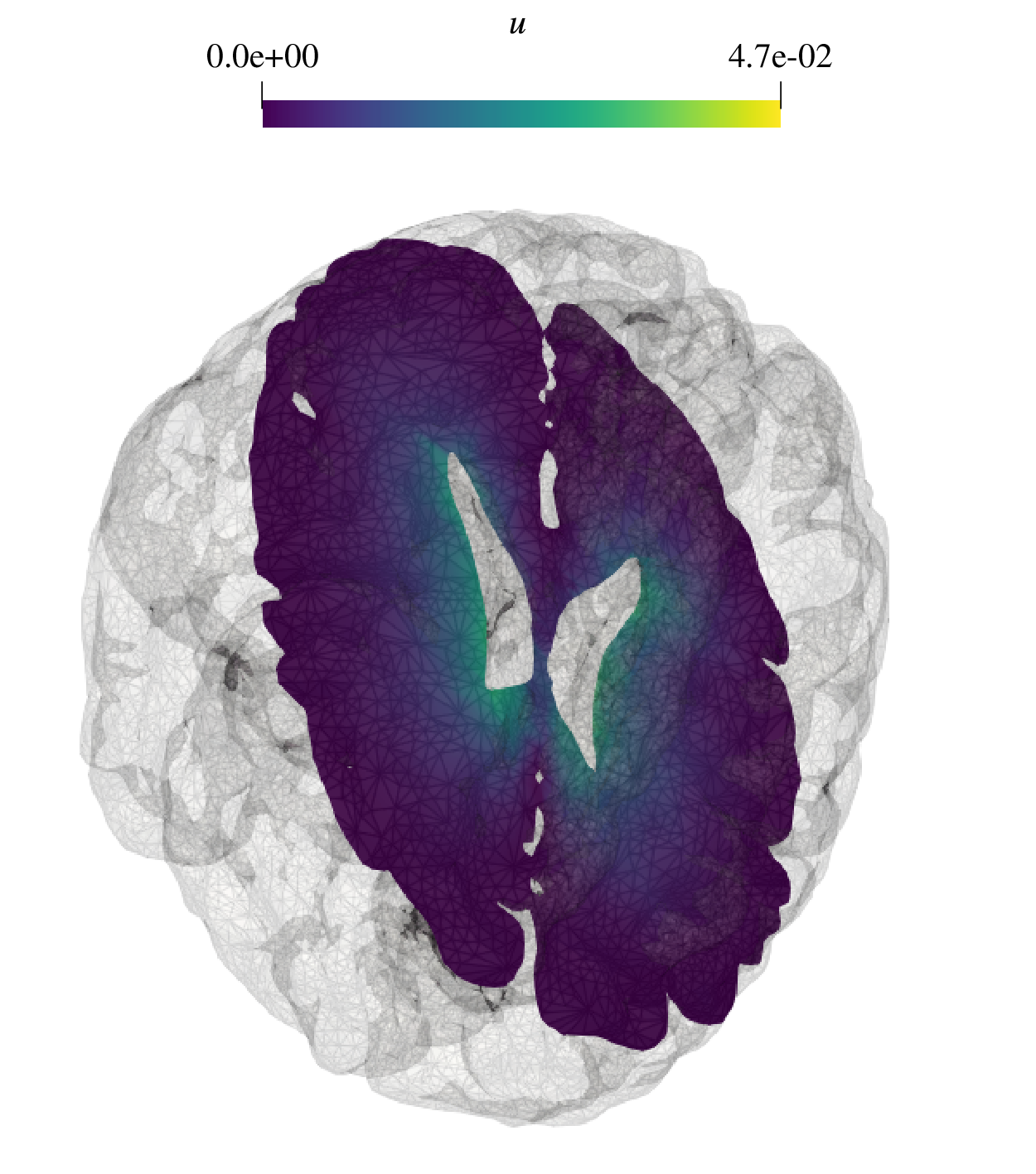}
		\caption{Displacement $u$}
	\end{subfigure}\hfill
	\begin{subfigure}[t]{.48\linewidth}
		\centering
		\includegraphics[width=\linewidth]{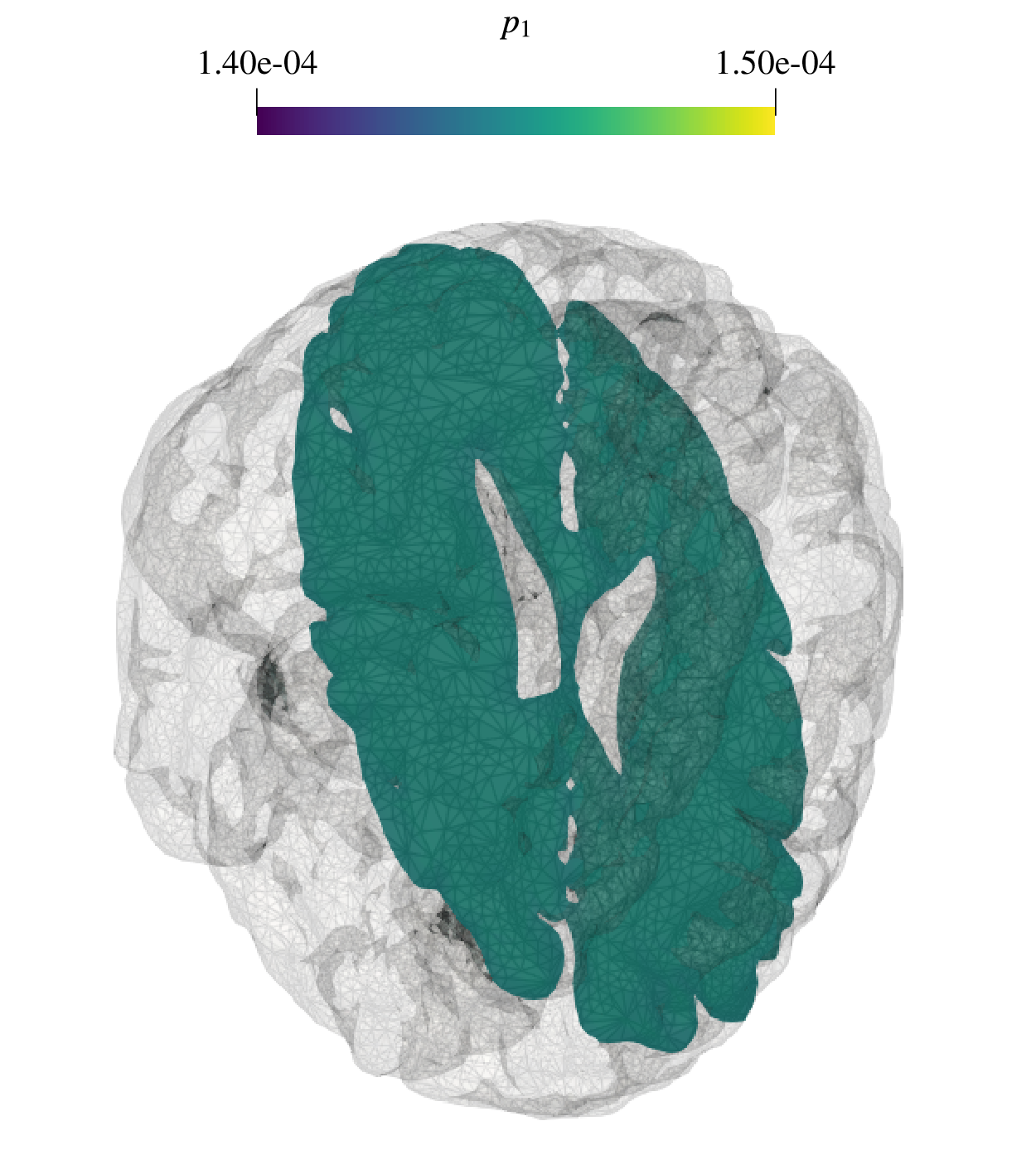}
		\caption{Arteriole pressure $p_1$}
	\end{subfigure}\\
	\begin{subfigure}[t]{.48\linewidth}
		\centering
		\includegraphics[width=\linewidth]{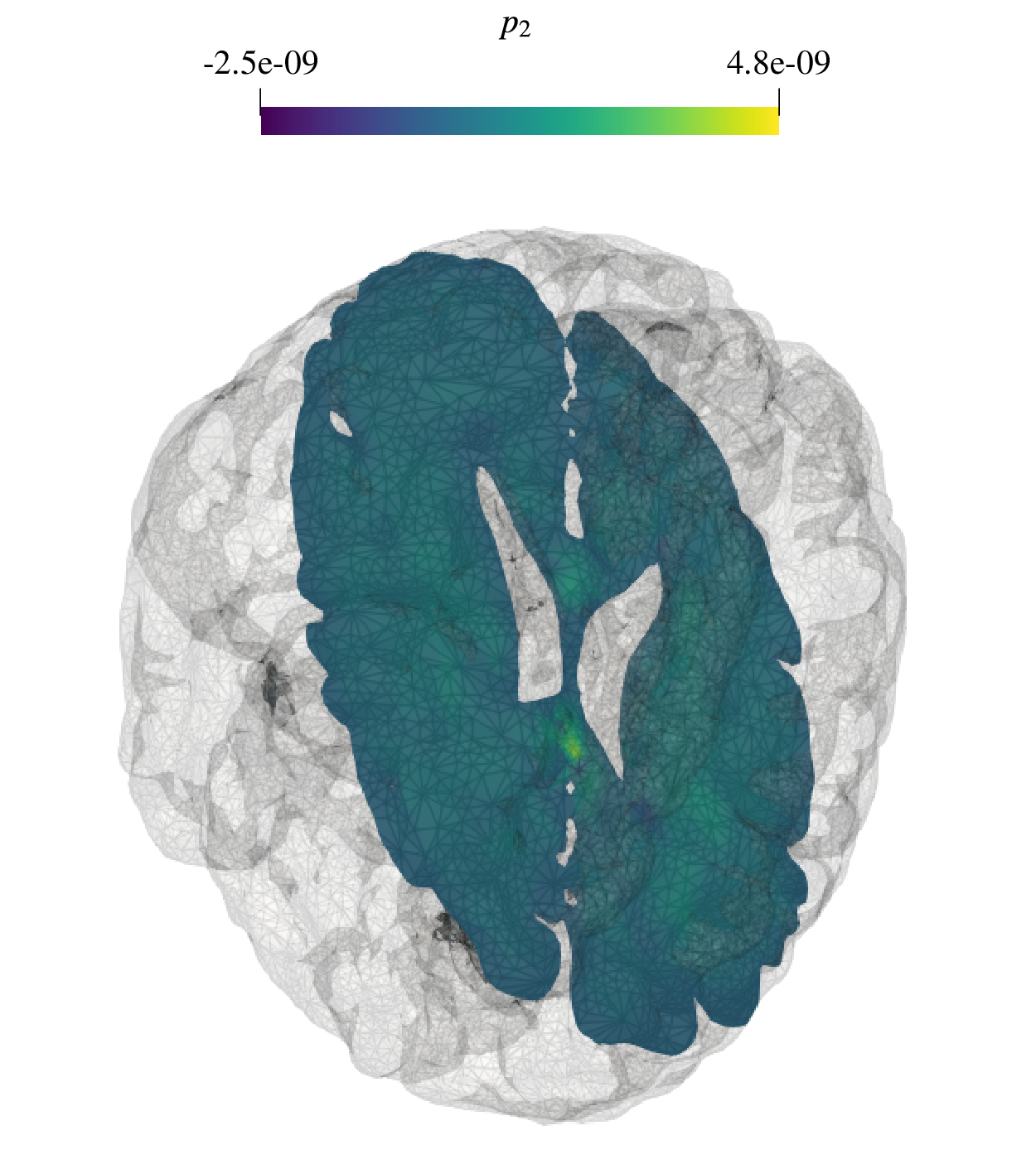}
		\caption{Venous pressure $p_2$}
	\end{subfigure}\hfill
	\begin{subfigure}[t]{.45\linewidth}
		\centering
		\includegraphics[width=\linewidth]{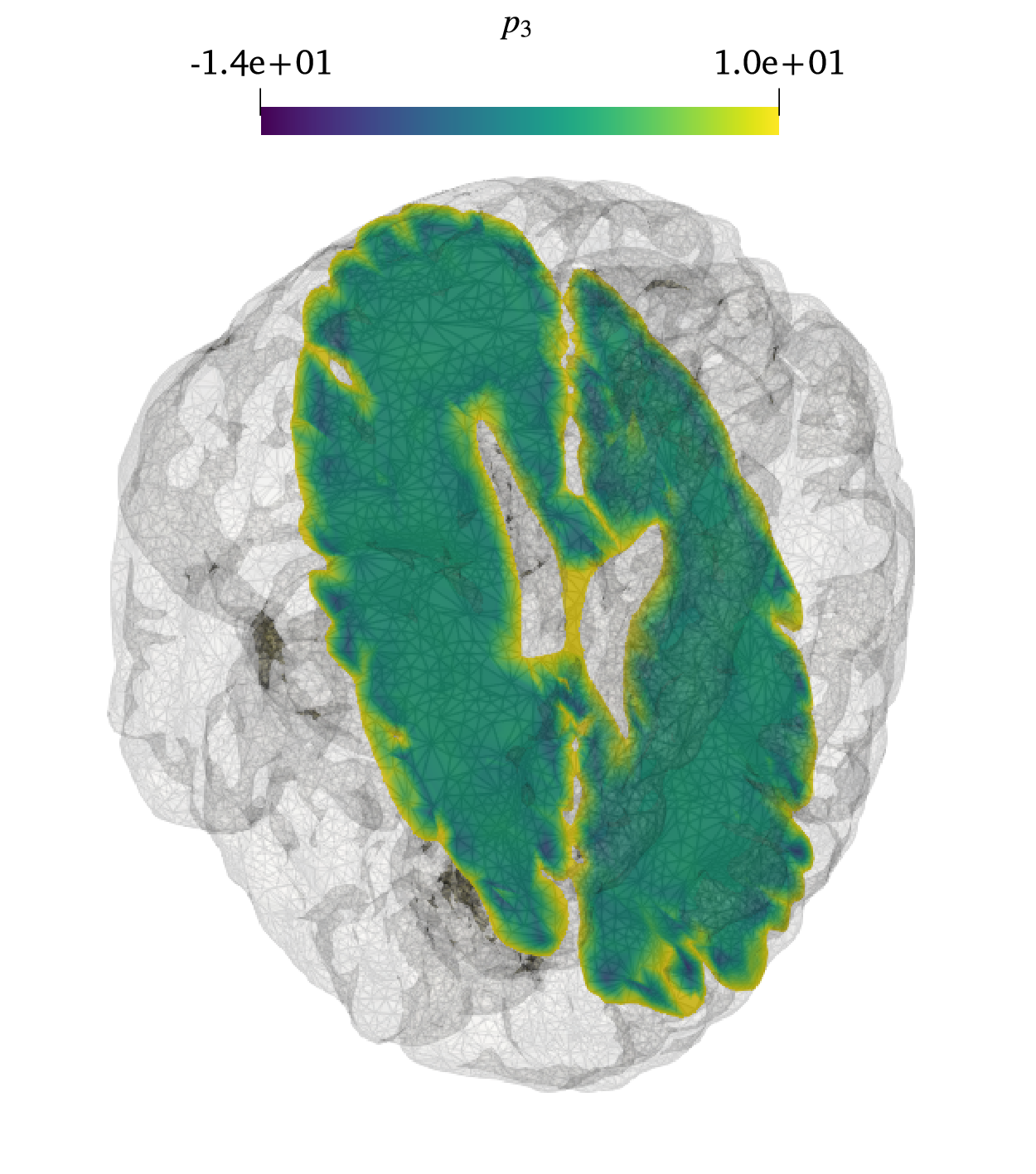}
		\caption{Perivascular pressure $p_3$}
	\end{subfigure}
	\caption{Illustration of the deformation and the three pressure variables at
	the final time $T=1$.}
	\label{fig:brainFields}
\end{figure}

\section{Conclusions}
\label{sec:conclusion}
In this article, a combined analysis of operator splitting together with higher-order \BDF time discretization is studied.
For an optimal balancing of the two error components, one needs to specify the tolerance for the termination criterion of the operator splitting in the specified form.
The results are thoroughly demonstrated in a number of numerical experiments and applied to a real-world example from biomechanics.	
A similar combined analysis of operator splitting together with Runge--Kutta methods is deferred for a future work.
%
%
\section*{Acknowledgements} 
This project is funded by the Deutsche Forschungsgemeinschaft (DFG, German Research Foundation) - 467107679. 
Parts of this work were carried out while RA and AM were affiliated with the Institute of Mathematics, University of Augsburg. RA was also affiliated with the Centre for Advanced Analytics and Predictive Sciences (CAAPS). Moreover, AM and BU acknowledge support by the Stuttgart Center for Simulation
Science (SimTech).
%
%
\bibliographystyle{plain-doi}
\bibliography{literature}

\end{document}